\theoremstyle{definition}
\newtheorem{theorem}{Theorem}
\newtheorem{lemma}[theorem]{Lemma}
\newtheorem{corollary}[theorem]{Corollary}
\newtheorem{definition}[theorem]{Definition}
\newcommand{\genlegendre}[4]{%
  \genfrac{(}{)}{}{#1}{#3}{#4}%
  \if\relax\detokenize{#2}\relax\else_{\!#2}\fi
}
\newcommand{\leg}[3][]{\genlegendre{}{#1}{#2}{#3}}
\begin{document}

\title{Sum of Distinct Biquadratic Residues Modulo Primes}
\author{Samer Seraj\thanks{samer\_seraj@outlook.com, Existsforall Academy}}
\date{\today}
\maketitle

\begin{abstract}
    Two conjectures, posed by Finch-Smith, Harrington, and Wong in a paper published in \textit{Integers} in $2023$, are proven. Given a monic biquadratic polynomial $f(x) = x^4 + cx^2 + e$, we prove a formula for the sum of its distinct outputs modulo any prime $p\ge 7$. Here, $c$ is an integer not divisible by $p$ and $e$ is any integer. The formula splits into eight cases, depending on the remainder of $p$ modulo $8$ and whether $c$ is a quadratic residue modulo $p$. The formula quickly extends to the non-monic case. We then apply the formula to prove a classification of the set of such sums in terms of the sets of squares and fourth powers, when $c$ in $x^4 + cx^2$ is varied over all integers with a fixed prime modulus $p\ge 7$. The sum and the set of sums are manually computed for the excluded prime moduli $p=3,5$.
\end{abstract}

\section{Introduction}

\begin{definition}
For an integer polynomial $f\in \mathbb{Z}[x]$ and a modulus $n$, let
$$R_n (f) = \{f(x) \pmod{n} : x\in\mathbb{Z}\}.$$
be the set of distinct outputs of $f(x)$ modulo $n$ as $x$ ranges over the integers. Note that this is a finite set because the inputs $x=0,1,2,\ldots,n-1$ lead to all outputs modulo $n$, most likely with repetition.
\end{definition}

A few natural questions arise, including the determination of the cardinality $|R_n (f)|$, and the sum $\sum{R_n (f)}$ and product $\prod{R_n (f)}$ of these distinct outputs modulo $n$. The general quadratic sum modulo primes was resolved by Gross, Harrington, and Minott, building on the much earlier work of Stetson \cite{Stetson} on the residues of triangular numbers. The main result of \cite{Harrington2} was to determine the sum of distinct residues module primes of the general cubic polynomial.

Let $p\ge 7$ be any fixed prime modulus for the remainder of the paper, so that the notation $R(f)$ may be used instead of $R_p (f)$. Also, all congruences without a stated modulus are understood to be modulo $p$. The present paper first proves a formula for $\sum{R(x^4 + cx^2 + e)}$ that was conjectured in a paper of Finch-Smith, Harrington, and Wong \cite[p.~7]{Harrington2}. The formula for $c\equiv 0\pmod{p}$ is given in \cite[p.~4]{Harrington2}, so we will focus on $p\nmid c$.

One can also ask for a classification of the structure of the set of distinct sums produced as the coefficients of $f$ vary over the integers. This is defined in a specific relevant case as follows.

\begin{definition}\label{def:set-of-sum-of-residues}
Let
$$S(p) = \left\{\sum{R_p(x^4 + cx^2)} : c = 0,1,2,\ldots,p-1\right\}$$
be the set of all distinct sums $R_p(x^4 + cx^2)$. Note that, since we are working with a polynomial modulo $p$, defining $S(p)$ using $c\in \{0,1,2,\ldots,p-1\}$ is equivalent to using all integers $c$.
\end{definition}

Our second goal is to express the sets of sums $S(p)$ in terms of $R_p (x^2)$ and $R_p (x^4)$, which will resolve another conjecture of Finch-Smith, Harrington, and Wong \cite[p.~6]{Harrington2}.

\begin{definition}\label{def:main-sets}
We define following sets:
\begin{enumerate}
    \item Let $\mathbb{Z}_p = \{0,1,2,\ldots, p-1\}$, with the standard operations of addition and multiplication defined on it modulo $p$.
    \item Let $A_0 = \{a\in\mathbb{Z}_p : \exists x\in\mathbb{Z}_p\backslash\{0\}, x^2 \equiv a \pmod{p}\}$ be the set of quadratic residues modulo $p$, which does not include the $0$ residue. Let $A_{0}' = A_0\cup\{0\}$.
    \item Let $A_1 = \mathbb{Z}_p \backslash A_{0}'$ be the set of quadratic non-residues modulo $p$, which also does not include $0$. Let $A_1 \cup\{0\} = A_{1}'$.
    \item For a set of inputs $S$ and a polynomial $g$, let $R_{a\in S} (g(a))$ denote the set of distinct outputs of $g(a)$ modulo $p$, as $a$ ranges over $S$.
    \item For any positive integer $t$, let $[t]=\{1,2,\ldots,t\}$.
\end{enumerate}
\end{definition}

\begin{definition}\label{def:Legendre-symbol}
The Legendre symbol is defined for integers $a$ as
$$
\leg{a}{p} =
\begin{cases}
    \hfill 1 &\text{ if } a\in A_0\\
    \hfill 0 &\text{ if } p\mid a\\
    \hfill -1 &\text{ if } a\in A_1
\end{cases}.
$$
This lends consistency to the notation of $A_0$ and $A_1$ because $1 = (-1)^0$ and $-1 = (-1)^1$, matching the Legendre symbol.
\end{definition}

\section{The Sum of Distinct Residues}

\begin{lemma}[First supplement to quadratic reciprocity]\label{lem:1st-supplement}
$$
\leg{-1}{p}
=
\begin{cases}
    \hfill 1 &\text{ if } p\equiv 1,5\pmod{8},\\
    \hfill -1 &\text{ if } p \equiv 3,7\pmod{8}
\end{cases}
$$
\end{lemma}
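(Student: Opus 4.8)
The plan is to derive the statement from Euler's criterion, which is the standard route for the first supplement. Recall Euler's criterion: for any integer $a$ with $p\nmid a$, one has $\leg{a}{p}\equiv a^{(p-1)/2}\pmod p$. I would either cite this as classical or include the short argument: in the field $\mathbb{Z}_p$, the polynomial $x^{(p-1)/2}-1$ has at most $(p-1)/2$ roots by Lagrange's theorem on polynomials over a field, while every nonzero square $b^2$ satisfies $(b^2)^{(p-1)/2}=b^{p-1}=1$ by Fermat's little theorem; since there are exactly $(p-1)/2$ nonzero squares, these are precisely the roots of the first factor of $x^{p-1}-1=(x^{(p-1)/2}-1)(x^{(p-1)/2}+1)$, and the remaining $(p-1)/2$ nonzero elements must therefore satisfy $x^{(p-1)/2}=-1$, i.e., they are exactly the non-residues.

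Next I would specialize to $a=-1$, obtaining $\leg{-1}{p}\equiv(-1)^{(p-1)/2}\pmod p$. Both sides lie in $\{1,-1\}$, and since $p\ge 7$ we have $1\not\equiv -1\pmod p$; hence the congruence upgrades to the genuine equality $\leg{-1}{p}=(-1)^{(p-1)/2}$.

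It then remains to read off the sign from the residue of $p$ modulo $8$. Writing $p=8k+r$ with $r\in\{1,3,5,7\}$, which are the only residues a prime $p\ge 7$ can occupy, we get $(p-1)/2=4k+(r-1)/2$, whose parity equals that of $(r-1)/2$. For $r=1$ and $r=5$ this quantity is even, so $\leg{-1}{p}=1$; for $r=3$ and $r=7$ it is odd, so $\leg{-1}{p}=-1$. This is exactly the dichotomy asserted in the lemma.

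There is no real obstacle: the result is entirely classical, and the only point deserving a word of care is the passage from the congruence in Euler's criterion to an equality, which is legitimate precisely because $p>2$. If one preferred to sidestep Euler's criterion, an alternative is Wilson's theorem: pairing $k$ with $p-k\equiv -k$ inside $(p-1)!$ gives $-1\equiv(p-1)!\equiv(-1)^{(p-1)/2}\left(\left(\tfrac{p-1}{2}\right)!\right)^2\pmod p$, so when $p\equiv 1\pmod 4$ the element $\left(\tfrac{p-1}{2}\right)!$ is a square root of $-1$, whereas when $p\equiv 3\pmod 4$ any square root $x$ of $-1$ would force $x^{p-1}=(-1)^{(p-1)/2}=-1$, contradicting Fermat's little theorem. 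I would nonetheless present the Euler-criterion proof, as it is the more direct of the two.
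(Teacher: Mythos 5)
Your proof is correct and complete: Euler's criterion applied to $a=-1$, the upgrade from congruence to equality since $p>2$, and the parity check of $(p-1)/2$ across the residues $1,3,5,7$ modulo $8$ give exactly the stated dichotomy (which is just the usual mod $4$ statement rewritten in the mod $8$ classes the paper uses). The paper states this classical lemma without proof, so there is nothing to compare against; your argument is the standard one and would serve as a valid proof if one were to be included.
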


\begin{lemma}[Second supplement to quadratic reciprocity]\label{lem:2nd-supplement}
$$
\leg{2}{p}
=
\begin{cases}
    \hfill 1 &\text{ if } p\equiv 1,7\pmod{8},\\
    \hfill -1 &\text{ if } p \equiv 3,5\pmod{8}
\end{cases}
$$
\end{lemma}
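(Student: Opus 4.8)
The plan is to deduce the formula from Gauss's lemma applied with $a = 2$. Recall (it is a one-line consequence of Euler's criterion) that for $p \nmid a$ one has $\leg{a}{p} = (-1)^{\mu}$, where $\mu$ counts the elements of $\left\{a, 2a, \ldots, \frac{p-1}{2}\,a\right\}$ whose least positive residue modulo $p$ exceeds $p/2$. If one prefers not to cite this, the short proof is to note that those $\frac{p-1}{2}$ least positive residues, each reflected through $p/2$ when it exceeds $p/2$, form a permutation of $\left\{1, 2, \ldots, \frac{p-1}{2}\right\}$; multiplying them together and cancelling $\left(\frac{p-1}{2}\right)!$ gives $a^{(p-1)/2} \equiv (-1)^{\mu} \pmod{p}$, and Euler's criterion identifies the left side with $\leg{a}{p}$.

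Specializing to $a = 2$, the relevant multiples are $2, 4, \ldots, p-1$, and each already equals its own least positive residue, lying strictly between $0$ and $p$. Hence $\mu$ is simply the number of integers $k$ with $1 \le k \le \frac{p-1}{2}$ and $2k > p/2$, that is, $\frac{p}{4} < k \le \frac{p-1}{2}$. Since $p$ is odd, $p/4$ is never an integer, so there is no ambiguity at the lower endpoint and $\mu = \frac{p-1}{2} - \lfloor p/4 \rfloor$.

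It then remains to split into the four classes $p \equiv 1, 3, 5, 7 \pmod{8}$. Writing $p = 8q + r$, both $\frac{p-1}{2}$ and $\lfloor p/4 \rfloor$ become explicit affine expressions in $q$ — with $\lfloor (8q+r)/4 \rfloor$ equal to $2q$ for $r \in \{1,3\}$ and to $2q+1$ for $r \in \{5,7\}$ — and a direct subtraction shows $\mu$ is even exactly when $r \in \{1,7\}$ and odd exactly when $r \in \{3,5\}$. Gauss's lemma then gives the two stated values of $\leg{2}{p}$. The only delicate point is this floor-function bookkeeping, and it is entirely routine; alternatively, one can bypass the case split by checking the single parity congruence $\mu \equiv (p^2-1)/8 \pmod{2}$ and invoking the closed form $\leg{2}{p} = (-1)^{(p^2-1)/8}$, which is itself provable, for instance, by computing $(\zeta + \zeta^{-1})^{p}$ in two ways for a primitive eighth root of unity $\zeta$, using $(\zeta + \zeta^{-1})^{2} = 2$.
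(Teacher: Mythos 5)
Your proposal is correct, but note that the paper does not prove this lemma at all: the second supplement is quoted as a classical fact (alongside the first supplement in Lemma~\ref{lem:1st-supplement}) and simply used downstream, so any comparison is between your argument and the standard literature rather than an argument in the paper. Your route---Gauss's lemma with $a=2$---is the textbook elementary derivation, and your bookkeeping checks out: for $a=2$ the multiples $2,4,\ldots,p-1$ are already reduced, so $\mu=\frac{p-1}{2}-\lfloor p/4\rfloor$, and writing $p=8q+r$ gives $\mu$ equal to $2q,\,2q+1,\,2q+1,\,2q+2$ for $r=1,3,5,7$ respectively, which yields exactly the stated sign pattern. One small caveat: the sentence asserting that the reflected least positive residues ``form a permutation of $\{1,\ldots,\frac{p-1}{2}\}$'' is where the actual content of Gauss's lemma lives (one must check that $ia\not\equiv\pm ja\pmod p$ for distinct $i,j$ in that range), so if you intend the argument to be self-contained rather than a citation of Gauss's lemma, that distinctness verification should be spelled out; it is two lines. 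Your alternative closed-form route via $\leg{2}{p}=(-1)^{(p^2-1)/8}$ and the computation of $(\zeta+\zeta^{-1})^p$ for a primitive eighth root of unity is also sound and arguably slicker, at the cost of working in an extension of $\mathbb{F}_p$. Either version is a perfectly acceptable proof of the lemma the paper leaves unproved.
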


\begin{lemma}\label{lem:closure-cosets}
Let $r \in A_0$ and $s\in A_1$. Due to the fact that the Legendre symbol $\leg{a}{p}$ in the variable $a$ is completely multiplicative, the fact that $|A_0|=|A_1|=\frac{p-1}{2}$, and a standard permutation or coset argument,
\begin{align*}
    r\cdot A_0 &= s\cdot A_1 = A_0,\\
    s\cdot A_0 &= r\cdot A_1 = A_1.
\end{align*}
Here, the multiplication of an element against a set denotes the multiplication of every member of the set by that constant element to produce a new set. As a result, if $p\equiv 3\pmod{4}$, since Lemma \ref{lem:1st-supplement} says $\leg{-1}{p} = -1$, we get $A_1 =(-1)\cdot A_0$, or, equivalently, $(-1)\cdot A_1 = A_0$.
\end{lemma}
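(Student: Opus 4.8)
The plan is to exploit the complete multiplicativity of the Legendre symbol together with the fact that multiplication by a fixed nonzero residue permutes $\mathbb{Z}_p\setminus\{0\}$. First I would fix a nonzero residue $m$ and observe that the map $a\mapsto ma$ is a bijection of $\mathbb{Z}_p\setminus\{0\}$ onto itself, since $p$ is prime and $m$ is invertible modulo $p$. In particular $|m\cdot A_0| = |A_0|$ and $|m\cdot A_1| = |A_1|$, so any inclusion of one of these sets into a set of the same cardinality can immediately be upgraded to an equality.

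Next I would use that $\leg{ma}{p} = \leg{m}{p}\leg{a}{p}$ for every integer $a$. Taking $m = r\in A_0$ gives $\leg{r}{p} = 1$, hence $\leg{ra}{p} = \leg{a}{p}$; so $a\in A_0$ implies $ra\in A_0$, i.e. $r\cdot A_0\subseteq A_0$, which by the cardinality count forces $r\cdot A_0 = A_0$, and the identical argument with $A_1$ in place of $A_0$ gives $r\cdot A_1 = A_1$. Taking instead $m = s\in A_1$ gives $\leg{s}{p} = -1$, hence $\leg{sa}{p} = -\leg{a}{p}$; so $a\in A_0$ implies $sa\in A_1$, giving $s\cdot A_0\subseteq A_1$, and since $|s\cdot A_0| = |A_0| = |A_1|$ we get $s\cdot A_0 = A_1$, with $s\cdot A_1 = A_0$ following symmetrically. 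This establishes all four displayed identities.

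Finally, for the stated consequence when $p\equiv 3\pmod 4$: Lemma \ref{lem:1st-supplement} gives $\leg{-1}{p} = -1$, i.e. $-1\in A_1$, so the case $s = -1$ of the identity $s\cdot A_0 = A_1$ yields $(-1)\cdot A_0 = A_1$, and either multiplying both sides by $-1$ or applying the case $s=-1$ of $s\cdot A_1 = A_0$ gives the equivalent form $(-1)\cdot A_1 = A_0$.

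I expect essentially no real obstacle here: the only points needing any care are the bijectivity of multiplication by a unit, which is what lets the inclusions become equalities, and the invocation of $|A_0| = |A_1| = \tfrac{p-1}{2}$, both of which are standard and already referenced in the statement.
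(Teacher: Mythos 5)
Your proof is correct and is essentially the argument the paper itself gestures at: the lemma is stated without a separate proof, with the justification (complete multiplicativity of $\leg{a}{p}$, the count $|A_0|=|A_1|=\frac{p-1}{2}$, and the permutation/coset argument) folded into the statement, and your write-up simply makes that sketch precise, including the bijectivity of $a\mapsto ma$ that upgrades the inclusions to equalities. Nothing is missing.
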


We outline several reductions:
\begin{enumerate}
    \item Firstly, since $x^4 + cx^2 + e$ is the composition of $x^2 + cx + e$ over $x^2$, it suffices to instead compute the sum of all distinct $a^2 +ca+e$ modulo $p$, as $a$ ranges over $A_{0}'$. The distinctness of the summands $a^2 +ca+e$ is a key difficulty, as there will likely be repeated summands when $a$ iterates over $A_{0}'$. In terms of the notation described in Definition \ref{def:main-sets}, we denote this set of distinct summands as $R_{a\in A_{0}'} (a^2 + ca + e)$ in order to specify that $a$ must be a quadratic residue or $0$, and not just any arbitrary element of $\mathbb{Z}_p$.
    \item Secondly, since $e$ is a constant,
    $$\sum{R_{a\in A_{0}'}{(a^2 + ca + e)}} \equiv \sum{R_{a\in A_{0}'}{(a^2 + ca)}} + |R_{a\in A_{0}'}{(a^2 + ca)}|\cdot e,$$
    so it suffices to compute the sum and cardinality on the right side separately.
    \item Thirdly, using the notation $t\cdot S$ to mean that all elements of the set $S$ are multiplied by $t$,
    $$R_{a\in A_{0}'}(a^2 + ca) = c^2 \cdot R_{a\in A_{0}'}((c^{-1}a)^2+(c^{-1}a)).$$ By Lemma \ref{lem:closure-cosets},
    $$
    c^{-1}\cdot A_{0}' = 
    \begin{cases}
        \hfill A_{0}' &\text{ if } c\in A_0,\\
        \hfill A_{1}' &\text{ if } c\in A_1
    \end{cases}.
    $$
    If $c\in A_0$, it suffices to compute $\sum{R_{a\in A_{0}'}{(a^2 + ca + e)}}$ as
    $$c^2\cdot\sum{R_{a\in A_{0}'}{(a^2 + a)}} + |R_{a\in A_{0}'}{(a^2 + a)}|\cdot e.$$ If $c\in A_1$, we compute it as
    $$c^2\cdot\sum{R_{a\in A_{1}'}{(a^2 + a)}} + |R_{a\in A_{1}'}{(a^2 + a)}|\cdot e.$$
\end{enumerate}

The basic idea behind computing the sums and cardinalities of the sets $R_{a\in A_{0}'}{(a^2 + a)}$ and $R_{a\in A_{1}'}{(a^2 + a)}$ is to first forget about distinctness of $a^2 + a$ as $a$ ranges over $A_{0}'$ or $A_{1}'$, and then remove the repetition.

\begin{lemma}\label{lem:faulhaber}
For any prime $p\ge 7$ and $k=1,2,$ and $4$,
$$\sum_{i=1}^{p-1}{i^k} \equiv 0\pmod{p}.$$
Subsequently,
$\sum_{a\in A_{0}'}{a^k} \equiv 0 \pmod{p}$ for $k=1$ and $2$.
\end{lemma}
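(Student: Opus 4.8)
The plan is to exploit the group structure of the units modulo $p$. First I would fix a primitive root $g$ modulo $p$, so that $\{1,2,\ldots,p-1\}$ coincides, as a set of residue classes, with $\{g^0,g^1,\ldots,g^{p-2}\}$. Then $\sum_{i=1}^{p-1} i^k \equiv \sum_{j=0}^{p-2}(g^k)^j \pmod p$ is a geometric progression with ratio $g^k$. The crucial observation is that $g^k \not\equiv 1 \pmod p$ exactly when $p-1\nmid k$; since $p\ge 7$ forces $p-1\ge 6 > 4 \ge k$ for every $k\in\{1,2,4\}$, the ratio is never $1$, so the closed form $\frac{(g^k)^{p-1}-1}{g^k-1} = \frac{(g^{p-1})^k - 1}{g^k - 1}$ is valid and vanishes modulo $p$ because $g^{p-1}\equiv 1$. (One can sidestep primitive roots entirely: choose any $t$ with $t^k\not\equiv 1$ — such $t$ exists since the equation $t^k\equiv 1$ has only $\gcd(k,p-1)\le k < p-1$ solutions — observe that $i\mapsto ti$ permutes $\{1,\ldots,p-1\}$, deduce $\sum i^k \equiv t^k\sum i^k$, and conclude $(t^k-1)\sum i^k\equiv 0$.) This is precisely where the hypothesis $p\ge 7$ is used: for $p=5$ and $k=4$ the sum is $\equiv 4\not\equiv 0$.

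For the ``subsequently'' clause I would reduce power sums over $A_0'$ to the case just treated. Since $0^k=0$ for $k\ge 1$, it suffices to show $\sum_{a\in A_0} a^k\equiv 0$ for $k=1,2$. Each quadratic residue is $x^2$ for exactly two values $x\in\{1,\ldots,p-1\}$ (namely $x$ and $p-x$), so $2\sum_{a\in A_0} a^k \equiv \sum_{x=1}^{p-1}(x^2)^k = \sum_{x=1}^{p-1} x^{2k}\pmod p$. For $k\in\{1,2\}$ we have $2k\in\{2,4\}$, so the first part of the lemma gives $\sum_{x=1}^{p-1} x^{2k}\equiv 0$; since $p$ is odd, $2$ is invertible and we may divide, obtaining $\sum_{a\in A_0} a^k\equiv 0$ and hence $\sum_{a\in A_0'} a^k\equiv 0$. (Equivalently, $A_0=\{g^0,g^2,\ldots,g^{p-3}\}$, and $\sum_j g^{2kj}$ is again geometric with ratio $g^{2k}\ne 1$ since $p-1\ge 6 > 4 \ge 2k$.)

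I do not anticipate a genuine obstacle: the content is the familiar fact that a geometric progression running over a full cyclic group of units sums to zero unless the relevant exponent is a multiple of the group order, together with careful bookkeeping of which exponents ($k$, and $2k$) remain below $p-1$. The only points requiring care are to state explicitly why $g^k\ne 1$ (respectively $g^{2k}\ne 1$) under $p\ge 7$, and to justify the factor-of-two passage from a sum over $A_0$ to a sum over all of $\{1,\ldots,p-1\}$.
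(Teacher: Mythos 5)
Your proof is correct, but it takes a genuinely different route from the paper for the main claim. The paper simply quotes Faulhaber's closed-form formulas for $\sum_{i=1}^{p-1} i^k$ with $k=1,2,4$ and observes that each contains an explicit factor of $p$, with the hypothesis $p\ge 7$ entering only to guarantee that the denominators $2,3,5$ are invertible modulo $p$. You instead argue via the multiplicative structure of $\mathbb{Z}_p^{\times}$: writing the sum as a geometric progression in a primitive root (or, equivalently, using the permutation $i\mapsto ti$ with $t^k\not\equiv 1$), so that the sum vanishes precisely because $(p-1)\nmid k$, with $p\ge 7$ entering through $p-1\ge 6>4\ge k$. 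Your argument is more general --- it gives $\sum_{i=1}^{p-1} i^k\equiv 0$ for every $k$ not divisible by $p-1$, with no polynomial identities or denominator bookkeeping --- and it correctly isolates why $p=5$, $k=4$ fails; the paper's version is more computational but self-contained at the level of elementary identities. For the ``subsequently'' clause your argument coincides with the paper's: each $a\in A_0$ has exactly two square roots in $[p-1]$, so $2\sum_{a\in A_0'}a^k\equiv\sum_{x=1}^{p-1}x^{2k}\equiv 0$ for $k=1,2$, and one divides by the invertible $2$. Both routes are sound; yours is a clean and slightly stronger alternative.
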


\begin{proof}
Faulhaber's formulas for $k=1,2,$ and $4$ state that
\begin{align*}
    \sum_{i=1}^{p-1}{i} &= \frac{1}{2}\cdot (p-1)p,\\
    \sum_{i=1}^{p-1}{i^2} &= \frac{1}{2\cdot 3}\cdot (p-1)p(2p-1),\\
    \sum_{i=1}^{p-1}{i^4} &= \frac{1}{2\cdot 3\cdot 5}\cdot (p-1)p(2p-1)(3p^2 - 3p -1),
\end{align*}
which are all $0$ modulo $p$ because of the factor $p$. We have used the invertibility of the denominators $2,3,$ and $5$ modulo primes $p\ge 7$ here.

It is well-known that every element $a\in A_{0}$ has exactly two distinct square roots $i\in [p-1]$, and the only square root of $0$ is $0$ which does not alter the sum, so the consequence follows from dividing the second and third zero sums by $2$ or multiplying them by $2^{-1}$.
\end{proof}

As a result of Lemma \ref{lem:faulhaber}, $$\sum_{a\in A_{0}'}(a^2 + a) \equiv 0\pmod{p},$$ which means that what will really matter is the sum of the repeated terms that will be subtracted from this $0$ sum. Moreover, since $0^2 + 0 \equiv 0$ does not contribute anything to a sum,
$$\sum_{a\in A_{0}}(a^2 + a) \equiv \sum_{a\in A_{0}'}(a^2 + a)\pmod{p},$$
and, similarly, the same sums over $A_{1}$ and $A_{1}'$ are congruent.

Now we will classify when $a^2 + a$ is congruent to $b^2 + b$ for $a,b \in A_{0}'$ in order to understand when repetition occurs.

\begin{lemma}\label{lem:overcounting-classification}
Suppose $a\not\equiv b$ and $a^2 + a \equiv b^2 + b$. This holds if and only if $b\equiv -a-1$. Moreover, since we will require both $a,b \in A_{0}'$ or both $a,b\in A_{1}'$, the following criteria will be helpful:
\begin{align*}
1 = \leg{b}{p} &\iff \leg{a+1}{p} = \begin{cases}
    \hfill 1 &\text{ if } p\equiv 1,5\pmod 8,\\
    \hfill -1 &\text{ if } p\equiv 3,7\pmod 8
\end{cases},\\
-1 = \leg{b}{p} &\iff \leg{a+1}{p} =
\begin{cases}
    \hfill -1 &\text{ if } p\equiv 1,5\pmod 8,\\
    \hfill 1 &\text{ if } p\equiv 3,7\pmod 8
\end{cases}
\end{align*}
Also, $a^2 + a\equiv 0$ if and only if $a\equiv 0,-1$.
\end{lemma}

\begin{proof}
Assuming $a\not\equiv b$, we manipulate $$a^2 +a\equiv b^2 +b \iff (a-b)(a+b+1) \equiv 0 \iff b\equiv -a-1.$$
For the second point,
$$(-1)^i = \leg{b}{p} = \leg{-a-1}{p} = \leg{-1}{p} \leg{a+1}{p},$$\
which is true if and only if
$\leg{a+1}{p} = (-1)^i \cdot \leg{-1}{p},$
where $i=0,1$. The result follows from applying Lemma \ref{lem:1st-supplement}.

Lastly, $0\equiv a^2 + a \equiv a(a+1)$ if and only if $a\equiv 0,-1$. Note that, by Lemma \ref{lem:1st-supplement}, $-1$ is not always a quadratic residue modulo $p$.
\end{proof}

Lemma \ref{lem:overcounting-classification} motivates the definitions of the following sets.

\begin{definition}
Following the notation in \cite[p.~29]{Lemmermeyer}, for $i,j\in\{0,1\}$, let
$$A_{ij} = \left\{a\in [p-1] : a\in A_i, a+1\in A_j\right\}.$$
For example, $A_{01}$ consists of $a\in [p-1]$ such that $a$ is a quadratic residue and $a+1$ is a quadratic non-residue. In fact, we could have used $[p-2]$ instead of $[p-1]$ because, for $a\equiv p-1$, we have $a+1 \equiv 0$, which is not included in $A_{0}$ or $A_{1}$.
\end{definition}

It is evident from Lemma \ref{lem:overcounting-classification} that we must study the structure of the $A_{ij}$. Next, Lemma \ref{lem:A00-generation} studies a construction of $A_{00}$.

\begin{lemma}\label{lem:A00-generation}
Modulo $p$, the set $A_{00}$ is ``generated'' by the list $\left[\left(\frac{w^{-1}-w}{2}\right)^2\right]_{w=1}^{p-1}$ in the sense that:
\begin{enumerate}
    \item Every element of $A_{00}$ appears at least once.
    \item Elements of $\mathbb{Z}_p$ outside of $A_{00}\cup\{0\}$ do not appear, except $-1$, which appears if $-1 \in A_0$, and $0$, which always appears.
    \item Every element of $A_{00}$, which is a set that excludes $0$ and $-1$, appears exactly four times.
\end{enumerate}
Since $0^2 + 0 \equiv (-1)^2 + (-1) \equiv 0$, their irregular number of occurrences does not affect the fact that
$$\sum_{a\in A_{00}}{(a^2 +a)} \equiv \frac{1}{4}\cdot \sum_{w=1}^{p-1}{\left[\left(\frac{w^{-1}-w}{2}\right)^4+\left(\frac{w^{-1}-w}{2}\right)^2\right]}.$$
\end{lemma}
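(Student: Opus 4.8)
The plan is to analyze the list $L = \left[\left(\frac{w^{-1}-w}{2}\right)^2\right]_{w=1}^{p-1}$ by understanding, for a fixed target value $a$, how many $w\in[p-1]$ produce it. First I would set $u = \frac{w^{-1}-w}{2}$, so that $w$ satisfies $w^2 + 2uw - 1 \equiv 0$, i.e. $w \equiv -u \pm \sqrt{u^2+1}$; this is solvable for $w$ (automatically nonzero, since the product of the roots is $-1$) precisely when $u^2 + 1$ is a square, and it has two solutions when $u^2+1\in A_0$, one when $u^2+1\equiv 0$, and none when $u^2+1\in A_1$. So the value $a = u^2$ is hit by $w$'s coming from $u$ and from $-u$; I would count the preimages of a target $a$ by summing the $w$-counts over the (at most two) square roots $u$ of $a$.

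Next I would split into cases on $a$. For $a\equiv 0$: the only square root is $u\equiv 0$, giving $u^2+1\equiv 1\in A_0$, hence two values of $w$ — so $0$ appears (exactly twice, though the lemma only needs that it appears and that its multiplicity is irrelevant since $0^2+0\equiv0$). For $a\equiv -1$: the square roots $u$ satisfy $u^2 \equiv -1$, which exist iff $-1\in A_0$; when they do, $u^2 + 1 \equiv 0$, so each such $u$ contributes exactly one $w$, and $-1$ appears (again the exact count is immaterial since $(-1)^2 + (-1)\equiv 0$). For $a\notin\{0,-1\}$: $a$ has two distinct nonzero square roots $\pm u$, and $a$ appears in $L$ iff $a+1 = u^2+1\in A_0$ (note $u^2+1\not\equiv0$ here since $a\not\equiv-1$), in which case each of $\pm u$ gives two values of $w$, for a total of four. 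The condition "$a\in A_0$ and $a+1\in A_0$" is exactly $a\in A_{00}$, which establishes points (1), (2), and (3): elements of $A_{00}$ appear four times, $0$ and $-1$ are the only other values that can appear, and nothing in $\mathbb{Z}_p\setminus(A_{00}\cup\{0,-1\})$ appears.

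Finally, for the displayed sum identity, I would observe that the list $\left[\left(\frac{w^{-1}-w}{2}\right)^4 + \left(\frac{w^{-1}-w}{2}\right)^2\right]_{w=1}^{p-1}$ is obtained by applying the map $a\mapsto a^2 + a$ to the list $L$; hence its total is $\sum_{a} m(a)\,(a^2+a)$ where $m(a)$ is the multiplicity of $a$ in $L$. By the multiplicity analysis, $m(a) = 4$ for $a\in A_{00}$, and the only other contributions come from $a\equiv 0$ and $a\equiv -1$, both of which satisfy $a^2+a\equiv 0$ and so contribute nothing regardless of their multiplicities. Therefore $\sum_{w=1}^{p-1}\left[\left(\frac{w^{-1}-w}{2}\right)^4 + \left(\frac{w^{-1}-w}{2}\right)^2\right] \equiv 4\sum_{a\in A_{00}}(a^2+a)$, and dividing by $4$ (invertible mod $p\ge 7$) gives the claim. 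The main obstacle I anticipate is the bookkeeping in the preimage count — carefully tracking that $w$ ranges over $[p-1]$ (so $w\not\equiv 0$, which is automatic), that the two square roots $\pm u$ of a nonzero $a$ are genuinely distinct and yield disjoint sets of $w$'s, and handling the boundary cases $u^2+1\equiv 0$ and $u\equiv 0$ without double-counting; the quadratic-residue classification of $u^2+1$ is the crux, and it is exactly what ties the count to membership in $A_{00}$.
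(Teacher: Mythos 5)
Your proposal is correct, and it reaches the conclusion by a somewhat different route than the paper. You fix a target value $a$ and count its preimages under $w \mapsto \left(\frac{w^{-1}-w}{2}\right)^2$ directly: for each square root $u$ of $a$, the condition $\frac{w^{-1}-w}{2} \equiv u$ is the quadratic $w^2 + 2uw - 1 \equiv 0$ with discriminant $4(u^2+1) \equiv 4(a+1)$, so the number of $w$'s is governed by the quadratic-residue status of $a+1$. The paper argues in the opposite direction: it parametrizes solutions of $x^2 + 1 \equiv y^2$ by $w \equiv y-x$ (factoring $1 \equiv (y-x)(y+x)$), which shows every element of $A_{00}$ occurs in the list, and then counts collisions by showing that $w$ and $z$ yield the same value iff $z \equiv \pm w, \pm w^{-1}$, with degeneracies only when $w^2 \equiv \pm 1$, i.e.\ exactly when the value is $0$ or $-1$. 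These are the same four-to-one count read in opposite directions; your version makes the membership criterion ($a \in A_0$ and $a+1 \in A_0$) and even the exact multiplicities of the exceptional values $0$ and $-1$ (two each, when they occur) explicit, at the cost of a bit more case bookkeeping, while the paper's collision count avoids discussing square roots of $a$ at all. One detail you flag as an anticipated obstacle should simply be stated: for $a \in A_{00}$ the four preimages are genuinely distinct, since each quadratic has two distinct roots (its discriminant $4(a+1) \not\equiv 0$ because $a \not\equiv -1$) and a common root of $w^2 + 2uw - 1$ and $w^2 - 2uw - 1$ would force $4uw \equiv 0$, impossible as $u, w \not\equiv 0$. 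With that sentence added, your argument is complete, and the final step (applying $a \mapsto a^2+a$, discarding the null contributions of $0$ and $-1$, and dividing by $4$) matches the paper.
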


\begin{proof}
Suppose $(x,y)\in\mathbb{Z}_{p}^{2}$ such that $x^2 + 1 \equiv y^2$. Then
$$1\equiv y^2 - x^2 \equiv (y-x)(y+x).$$
Since $y\not\equiv x$ (otherwise the above congruence would equal $0$ instead of $1$), let $w\equiv y-x$, and so $y+x \equiv w^{-1}$. Solving this system yields
$$(x,y) = \left(\frac{w^{-1}-w}{2},\frac{w^{-1}+w}{2}\right),$$ which shows that $x^2$ does appear in the stated list. It may be verified that, for these $(x,y)$, it holds that $x^2 + 1 = y^2$. So, only elements of $A_{00}\cup\{0\}$, along with $-1$ if $-1\in A_0$, are generated by the list.

Now we count repetition in the list. Suppose $w,z\in\mathbb{Z}_p\backslash\{0\}$ such that
\begin{align*}
    &\left(\frac{w^{-1}-w}{2}\right)^2 \equiv \left(\frac{z^{-1}-z}{2}\right)^2\\
    \iff& 0 \equiv (w^{-1}-w)^2 - (z^{-1}-z)^2\\
    \iff & 0 \equiv (w-z)(w+z)(wz-1)(wz+1)\iff z\equiv \pm w, \pm w^{-1}.
\end{align*}
We claim that some of these four $z$ are congruent if and only if $w^2 \equiv \pm 1$. It is impossible that $w\equiv -w$ or $w^{-1}\equiv -w^{-1}$ because $p\ne 2$ and $p\nmid w$. If $w\equiv w^{-1}$ or $w\equiv w^{-1}$, then $w^2 \equiv 1$. If $w\equiv -w^{-1}$ or $-w\equiv w^{-1}$, then $w^2 \equiv -1$. In these cases, $$\left(\frac{w^{-1}-w}{2}\right)^2 \equiv \frac{w^{-2} + w^{2}-2}{4} \equiv 0,-1.$$ Thus, all congruence classes of $A_{00}$ are covered exactly four times. Note that $w^2 \equiv -1$ occurs for some $w\in \mathbb{Z}_p$ precisely when $p\equiv 1,5\pmod{8}$, by Lemma \ref{lem:1st-supplement}.
\end{proof}

There are times when $a=\frac{p-1}{2}$ appears in $A_{ij}$. In such cases, it requires special consideration because it is the only $a\in\mathbb{Z}_p$ for which $(a,a+1)$ and $(b,b+1)=(-a-1,-a)$ are identical. This leads to studying Lemma \ref{lem:plusminus-2-legendre} below.

\begin{lemma}\label{lem:plusminus-2-legendre}
Note that $\left(\frac{\left(\frac{p-1}{2}\right)}{p}\right)=\leg{-2}{p}=\leg{-1}{p}\leg{2}{p}$ and $\left(\frac{\left(\frac{p+1}{2}\right)}{p}\right)=\leg{2}{p}$. As a direct consequence of Lemma \ref{lem:1st-supplement} and Lemma \ref{lem:2nd-supplement},
\begin{align*}
    \left(\leg{-2}{p},\leg{2}{p}\right) =
    \begin{cases}
        \hfill (1,1) &\text{ if } p\equiv 1\pmod{8}\\
        \hfill (1,-1) &\text{ if } p\equiv 3\pmod{8}\\
        \hfill (-1,-1) &\text{ if } p\equiv 5\pmod{8}\\
        \hfill (-1,1) &\text{ if } p\equiv 7\pmod{8}
    \end{cases}.
\end{align*}
\end{lemma}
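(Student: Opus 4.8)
The plan is to deduce the two displayed identities directly from complete multiplicativity of the Legendre symbol together with the elementary fact that $\leg{t^{-1}}{p} = \leg{t}{p}$ whenever $p\nmid t$ (both sides lie in $\{\pm 1\}$ and their product is $\leg{1}{p}=1$), and then to read off the case table by intersecting the congruence conditions already recorded in Lemma \ref{lem:1st-supplement} and Lemma \ref{lem:2nd-supplement}.

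For the first identity I would observe that $2\cdot\frac{p-1}{2} = p-1 \equiv -1 \pmod{p}$, so $\frac{p-1}{2} \equiv -2^{-1} \pmod{p}$, where division by $2$ is legitimate since $p$ is odd; hence $\leg{(p-1)/2}{p} = \leg{-1}{p}\leg{2^{-1}}{p} = \leg{-1}{p}\leg{2}{p} = \leg{-2}{p}$ by multiplicativity. For the second, $2\cdot\frac{p+1}{2} = p+1 \equiv 1 \pmod{p}$ gives $\frac{p+1}{2} \equiv 2^{-1} \pmod{p}$, so $\leg{(p+1)/2}{p} = \leg{2^{-1}}{p} = \leg{2}{p}$.

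To build the table I would run through the four classes $p\equiv 1,3,5,7\pmod{8}$ in turn: Lemma \ref{lem:1st-supplement} supplies $\leg{-1}{p}$, Lemma \ref{lem:2nd-supplement} supplies $\leg{2}{p}$, and their product is $\leg{-2}{p}$. This produces the pairs $(1,1)$, $(1,-1)$, $(-1,-1)$, and $(-1,1)$ respectively, exactly as stated.

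I do not expect any real obstacle here: the lemma is pure bookkeeping layered on top of the two supplements. The only points meriting a word of care are the identity $\leg{t^{-1}}{p} = \leg{t}{p}$ and the fact that $\frac{p\pm 1}{2}$ is an integer congruent to $\pm 2^{-1}$ modulo $p$; once these are in place the computation is immediate.
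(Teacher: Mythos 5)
Your proposal is correct and matches the paper's (implicit) argument: the paper also treats the two identities $\leg{(p-1)/2}{p}=\leg{-2}{p}$ and $\leg{(p+1)/2}{p}=\leg{2}{p}$ as immediate from multiplicativity and then reads the four cases off Lemmas \ref{lem:1st-supplement} and \ref{lem:2nd-supplement}, exactly as you do. Your explicit justification via $\frac{p\mp 1}{2}\equiv \mp 2^{-1}\pmod{p}$ and $\leg{t^{-1}}{p}=\leg{t}{p}$ simply fills in the bookkeeping the paper leaves unstated.
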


We will now compute the values of the subtraction sums modulo $p$.

\begin{lemma}\label{lem:subtraction-sum-computations}
Modulo $p$, we compute
\begin{align*}
    \sum_{a\in A_{00}}{(a^2+a)} & \equiv \sum_{a\in A_{11}}{(a^2+a)}\equiv \frac{1}{32},\\
    \sum_{a\in A_{01}}{(a^2+a)} & \equiv \sum_{a\in A_{10}}{(a^2+a)}\equiv -\frac{1}{32}.
\end{align*}
\end{lemma}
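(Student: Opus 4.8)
The plan is to compute $\sum_{a\in A_{00}}(a^2+a)$ using the generation result of Lemma \ref{lem:A00-generation}, then deduce the other three sums from this one via the symmetries recorded in Lemma \ref{lem:overcounting-classification} and Lemma \ref{lem:faulhaber}. First I would start from the identity in Lemma \ref{lem:A00-generation},
$$\sum_{a\in A_{00}}(a^2+a) \equiv \frac{1}{4}\sum_{w=1}^{p-1}\left[\left(\frac{w^{-1}-w}{2}\right)^4 + \left(\frac{w^{-1}-w}{2}\right)^2\right],$$
and expand the summand. Writing $u = \frac{w^{-1}-w}{2}$, we have $u^2 = \frac{w^{-2} - 2 + w^2}{4}$ and $u^4 = \frac{w^{-4} - 4w^{-2} + 6 - 4w^2 + w^4}{16}$. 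Summing over $w=1,\dots,p-1$ and using Lemma \ref{lem:faulhaber}, the sums $\sum w^{\pm 2}$ and $\sum w^{\pm 4}$ all vanish modulo $p$ (noting $w\mapsto w^{-1}$ permutes $[p-1]$, so $\sum w^{-k} \equiv \sum w^{k}$). The only surviving contributions are the constant terms: from $u^2$ we get $\frac{1}{4}\cdot(p-1)\cdot(-2) \equiv \frac{1}{4}\cdot(-1)\cdot(-2) = \frac{1}{2}$, and from $u^4$ we get $\frac{1}{16}\cdot(p-1)\cdot 6 \equiv \frac{1}{16}\cdot(-1)\cdot 6 = -\frac{3}{8}$. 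Hence the inner sum is $\frac{1}{2} - \frac{3}{8} = \frac{1}{8}$, and $\sum_{a\in A_{00}}(a^2+a) \equiv \frac{1}{4}\cdot\frac{1}{8} = \frac{1}{32}$.

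Next I would handle the other three sums. By Lemma \ref{lem:overcounting-classification}, the involution $a\mapsto -a-1$ satisfies $(a^2+a) = ((-a-1)^2 + (-a-1))$, and it sends $A_{ij}$ to $A_{j'i'}$ where the primes track the flip induced by multiplying by $\leg{-1}{p}$; in particular it gives a bijection between $A_{00}$ and $A_{11}$ when $p\equiv 3,7\pmod 8$ (since then $-1\in A_1$, so $a\in A_0 \iff -a-1\in A_1$ after accounting for the shift) — I will need to check the pairing case by case using Lemma \ref{lem:1st-supplement}. In every residue class of $p$ modulo $8$, this pairing shows $\sum_{a\in A_{00}}(a^2+a) \equiv \sum_{a\in A_{11}}(a^2+a)$ and $\sum_{a\in A_{01}}(a^2+a) \equiv \sum_{a\in A_{10}}(a^2+a)$, with the small caveat that the fixed point $a=\frac{p-1}{2}$ of the involution (when it lies in the relevant $A_{ij}$) contributes to only one side; Lemma \ref{lem:plusminus-2-legendre} tells us exactly which $A_{ij}$ contains $\frac{p-1}{2}$, and there one checks that $\left(\frac{p-1}{2}\right)^2 + \frac{p-1}{2} \equiv \frac{p^2-1}{4}\cdot\frac{1}{1} \equiv -\frac{1}{4}$ plus the degenerate generation count keeps the bookkeeping consistent, so the equalities still hold. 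Finally, to get $\sum_{a\in A_{01}}(a^2+a) \equiv -\frac{1}{32}$ I would use that $A_{00}, A_{01}$ together with $0$ and possibly $-1$ partition $A_0'$ (every $a\in A_0'$ has $a+1$ either in $A_0$, in $A_1$, or equal to $0$), so by Lemma \ref{lem:faulhaber},
$$0 \equiv \sum_{a\in A_0'}(a^2+a) \equiv \sum_{a\in A_{00}}(a^2+a) + \sum_{a\in A_{01}}(a^2+a) + \varepsilon,$$
where $\varepsilon$ accounts for $a=-1$ (which contributes $0$ since $(-1)^2+(-1)=0$) and for $a=p-1$ i.e. $a+1\equiv 0$ (also contributing $0$); hence $\sum_{a\in A_{01}}(a^2+a) \equiv -\frac{1}{32}$, and likewise $\sum_{a\in A_{10}}(a^2+a)\equiv -\frac{1}{32}$.

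The main obstacle I anticipate is the careful treatment of the boundary and fixed-point terms: the element $a=\frac{p-1}{2}$ is the unique fixed point of $a\mapsto -a-1$, and it appears in exactly one of the four sets $A_{ij}$ depending on $p\bmod 8$; simultaneously, Lemma \ref{lem:A00-generation} already warned that $0$ and $-1$ have irregular occurrence counts in the generating list, and $-1\in A_0$ only when $p\equiv 1,5\pmod 8$. Keeping all of these exceptional residues straight across the four residue classes of $p$ modulo $8$ — and verifying that in each class the claimed equalities and the partition identity come out exactly right — is the delicate part; the polynomial expansion and the applications of Lemma \ref{lem:faulhaber} are routine by comparison.
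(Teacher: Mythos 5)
Your first half coincides with the paper's proof: you compute $\sum_{a\in A_{00}}(a^2+a)\equiv\frac{1}{32}$ from Lemma \ref{lem:A00-generation} together with Lemma \ref{lem:faulhaber} (your term-by-term expansion gives the same value as the paper's condensed $\frac{w^{-4}+w^4-2}{16}$), and you then get $\sum_{a\in A_{01}}(a^2+a)\equiv-\frac{1}{32}$ from $\sum_{a\in A_0}(a^2+a)\equiv 0$ exactly as the paper does. Where you genuinely diverge is the remaining two sums: the paper uses the shift substitution $a\mapsto a-1$, which gives $\sum_{a\in A_{10}}(a^2+a)\equiv\sum_{a\in A_0}(a^2-a)-\sum_{a\in A_{00}}(a^2+a)\equiv 0-\frac{1}{32}$ uniformly in $p$, and then reads off $A_{11}$ from $\sum_{a\in A_1}(a^2+a)\equiv 0$; you instead use the involution $a\mapsto -a-1$, whose action on the $A_{ij}$ depends on $\leg{-1}{p}$. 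Your route does work, but one claim is stated too strongly: for $p\equiv 1\pmod 4$ the involution swaps $A_{01}\leftrightarrow A_{10}$ and maps $A_{00}$ and $A_{11}$ to themselves, while for $p\equiv 3\pmod 4$ it swaps $A_{00}\leftrightarrow A_{11}$ and preserves $A_{01}$ and $A_{10}$; so in each congruence class the pairing yields only \emph{one} of the two asserted equalities directly, and the other must be recovered (as it can be) from the zero sums over $A_0$ and $A_1$ --- your sentence asserting that the pairing gives both equalities in every class is not literally correct. On the other hand, the fixed-point bookkeeping you worry about is moot: a fixed point of $a\mapsto -a-1$ lying in one set of a swapped pair would have to lie in both, which is impossible since the $A_{ij}$ are disjoint; consistently, Lemma \ref{lem:plusminus-2-legendre} places $\frac{p-1}{2}$ in $A_{00}$ or $A_{11}$ precisely when $p\equiv 1\pmod 4$ and in $A_{01}$ or $A_{10}$ precisely when $p\equiv 3\pmod 4$, i.e.\ always in a self-mapped set, so the bijections between swapped sets require no correction term. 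With the case analysis spelled out as above your argument is complete; the paper's substitution is slightly slicker because it avoids the split on $p\bmod 4$, whereas your involution makes visible the same duality $(a,a+1)\mapsto(-a-1,-a)$ that the paper exploits later in the proof of Theorem \ref{thm:master-theorem}.
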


\begin{proof}
The computations begin with noticing the following splittings, where it is used that excluding the index $a=-1$, which is sometimes in $A_0$ or $A_1$ on the left, from the sums on the right (because $a+1\equiv 0$) does not make a difference to the identities,
\begin{align*}
    \sum_{a\in A_{0}}{(a^2 +a)} &\equiv \sum_{a\in A_{00}}{(a^2 +a)} + \sum_{a\in A_{01}}{(a^2 +a)},\\
    \sum_{a\in A_{1}}{(a^2 +a)} &\equiv \sum_{a\in A_{10}}{(a^2 +a)} + \sum_{a\in A_{11}}{(a^2 +a)}.
\end{align*}
Moreover, using Lemma \ref{lem:faulhaber},
$$\sum_{a\in A_{0}}{(a^2 +a)} + \sum_{a\in A_{1}}{(a^2 +a)} \equiv \sum_{a=1}^{p-1}{(a^2+a)} \equiv 0.$$
Next, we use Lemma \ref{lem:A00-generation} to compute
\begin{align*}
    \sum_{a\in A_{00}}{(a^2 +a)} &\equiv \frac{1}{4}\cdot \sum_{w=1}^{p-1}{\left[\left(\frac{w^{-1}-w}{2}\right)^4+\left(\frac{w^{-1}-w}{2}\right)^2\right]}\\
    &\equiv \frac{1}{4}\cdot \sum_{w=1}^{p-1}{\left(\frac{w^{-4}+w^4 -2}{16}\right)} \equiv \frac{1}{4}\cdot\sum_{w=1}^{p-1}{\frac{w^4-1}{8}}\\
    &\equiv \frac{1}{32}\cdot \left(\sum_{w=1}^{p-1}{w^4}-\sum_{w=1}^{p-1}{1}\right) \equiv -\frac{p-1}{32} \equiv \frac{1}{32},
\end{align*}
where Lemma \ref{lem:faulhaber} was used again. We remarked after Lemma \ref{lem:faulhaber} that $$\sum_{a\in A_{0}}{(a^2 +a)} \equiv \sum_{a\in A_{0}'}{(a^2 +a)} \equiv 0,$$ which leads to
$$\sum_{a\in A_{01}}{(a^2 +a)} \equiv \sum_{a\in A_{0}}{(a^2 +a)}- \sum_{a\in A_{00}}{(a^2 +a)} \equiv -\frac{1}{32}.$$

By using changes of variables and Lemma \ref{lem:faulhaber} again,
\begin{align*}
    \sum_{a\in A_{10}}{(a^2 +a)} &\equiv \sum_{\substack{a\in A_{1} \\ (a+1)\in A_{0}}}{a(a+1)}\equiv \sum_{\substack{(a-1)\in A_{1} \\ a\in A_{0}}}{(a-1)a}\\
    &\equiv \sum_{a\in A_{0}}{a(a-1)} - \sum_{\substack{(a-1)\in A_{0} \\ a\in A_{0}}}{a(a-1)}\\
    &\equiv \frac{1}{2}\cdot\sum_{i=1}^{p-1}{(i^2 -i)} - \sum_{\substack{a\in A_{0} \\ (a+1)\in A_{0}}}{(a+1)a}\\
    &\equiv 0-\sum_{a\in A_{00}}{(a^2 +a)} \equiv -\frac{1}{32}.
\end{align*}

In the second line of the computations above, there should technically have been a subtraction of $1(1-1) \equiv 0$ to account for the index $a-1=0$, but this is of no consequence. For the final computation, we see that
\begin{align*}
    \sum_{a\in A_{10}}{(a^2 +a)} + \sum_{a\in A_{11}}{(a^2 +a)} &\equiv \sum_{a\in A_{1} }{(a^2 +a)}\equiv -\sum_{a\in A_{0}}{(a^2 +a)} \equiv 0,\\
    \sum_{a\in A_{11}}{(a^2 +a)} &\equiv -\sum_{a\in A_{10}}{(a^2 +a)}\equiv \frac{1}{32}.
\end{align*}
\end{proof}

The legwork so far will be able to handle the sum part of the computation. For the cardinality part, we quote Lemma \ref{lem:Lemmermeyer} below.

\begin{lemma}\label{lem:Lemmermeyer}
According to \cite[p.~29]{Lemmermeyer},
\begin{enumerate}
    \item If $p\equiv 1,5\pmod{8}$, then $$|A_{00}| = \frac{p-5}{4},|A_{01}| = |A_{10}| = |A_{11}| = \frac{p-1}{4}.$$
    \item If $p\equiv 3,7\pmod{8}$, then $$|A_{01}| = \frac{p+1}{4},|A_{00}| = |A_{10}| = |A_{11}| = \frac{p-3}{4}.$$
\end{enumerate}
\end{lemma}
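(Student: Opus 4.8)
\section*{Proof proposal}

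The plan is to count each $A_{ij}$ by a standard quadratic character sum argument, rather than merely quoting \cite{Lemmermeyer}. The starting point is that for a \emph{nonzero} residue $a$, the quantity $\tfrac12\bigl(1+(-1)^i\leg{a}{p}\bigr)$ is the indicator of the event $a\in A_i$. As the remark following the definition of $A_{ij}$ notes, we may restrict the index to $a\in[p-2]$, since $a=p-1$ makes $a+1\equiv 0$ and so never lies in any $A_{ij}$. On this range both $a$ and $a+1$ are nonzero, so the indicators are exact and
$$
|A_{ij}| \;=\; \sum_{a=1}^{p-2}\frac{1+(-1)^i\leg{a}{p}}{2}\cdot\frac{1+(-1)^j\leg{a+1}{p}}{2}.
$$
Expanding the product produces four sums: the trivial count $\sum_{a=1}^{p-2}1=p-2$, the two linear sums $\sum_{a=1}^{p-2}\leg{a}{p}$ and $\sum_{a=1}^{p-2}\leg{a+1}{p}$, and the quadratic sum $\sum_{a=1}^{p-2}\leg{a}{p}\leg{a+1}{p}=\sum_{a=1}^{p-2}\leg{a(a+1)}{p}$.

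The linear sums are immediate from $\sum_{t=1}^{p-1}\leg{t}{p}=0$ (equally many residues and non-residues, Lemma \ref{lem:closure-cosets}): deleting $t=p-1$ gives $\sum_{a=1}^{p-2}\leg{a}{p}=-\leg{-1}{p}$, and reindexing by $b=a+1$ over $\{2,\ldots,p-1\}$ and deleting $b=1$ gives $\sum_{a=1}^{p-2}\leg{a+1}{p}=-1$. The quadratic sum is the crux. Using complete multiplicativity of the Legendre symbol and $a\ne 0$, write $\leg{a(a+1)}{p}=\leg{a^2}{p}\leg{1+a^{-1}}{p}=\leg{1+a^{-1}}{p}$; as $a$ runs over $[p-2]=\mathbb{Z}_p^\times\setminus\{-1\}$, the inverse $a^{-1}$ runs over the same set (inversion is a bijection of $\mathbb{Z}_p^\times$ and $(-1)^{-1}=-1$), so $1+a^{-1}$ runs over $\mathbb{Z}_p\setminus\{0,1\}$, whence $\sum_{a=1}^{p-2}\leg{a(a+1)}{p}=\sum_{t\ne 0,1}\leg{t}{p}=0-1=-1$. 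Assembling the four evaluations yields the single uniform formula
$$
|A_{ij}| \;=\; \frac14\Bigl(p-2-(-1)^i\leg{-1}{p}-(-1)^j-(-1)^{i+j}\Bigr).
$$

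It then remains only to substitute $\leg{-1}{p}$ from Lemma \ref{lem:1st-supplement} and simplify the four choices of $(i,j)$. For $p\equiv 1,5\pmod 8$ we have $\leg{-1}{p}=1$, giving $|A_{00}|=\frac{p-5}{4}$ and $|A_{01}|=|A_{10}|=|A_{11}|=\frac{p-1}{4}$; for $p\equiv 3,7\pmod 8$ we have $\leg{-1}{p}=-1$, giving $|A_{01}|=\frac{p+1}{4}$ and $|A_{00}|=|A_{10}|=|A_{11}|=\frac{p-3}{4}$, exactly as claimed. The only genuine obstacle is the quadratic character sum $\sum_a\leg{a(a+1)}{p}$; the rest is bookkeeping. (Strictly speaking the dependence is only on $p\bmod 4$, but splitting mod $8$ keeps the statement consistent with the other case analyses in the paper.)
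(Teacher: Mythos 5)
Your proposal is correct, and it differs from the paper in an essential way: the paper does not prove this lemma at all, but simply quotes the counts from Lemmermeyer \cite[p.~29]{Lemmermeyer}, whereas you supply a self-contained derivation. Your character-sum argument is sound at every step: the indicator $\tfrac12\bigl(1+(-1)^i\leg{a}{p}\bigr)$ is valid because both $a$ and $a+1$ are nonzero on the range $a\in[p-2]$ (exactly the restriction the paper's remark after the definition of $A_{ij}$ licenses); the two linear sums evaluate to $-\leg{-1}{p}$ and $-1$ as you say; and the key evaluation $\sum_{a=1}^{p-2}\leg{a(a+1)}{p}=-1$ via $\leg{a(a+1)}{p}=\leg{1+a^{-1}}{p}$ and the bijectivity of inversion on $\mathbb{Z}_p^\times$ (which fixes $-1$) is the standard trick and is carried out correctly. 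Substituting into your closed formula
$$|A_{ij}|=\frac14\Bigl(p-2-(-1)^i\leg{-1}{p}-(-1)^j-(-1)^{i+j}\Bigr)$$
reproduces all eight values in the lemma, and your parenthetical observation that the answer depends only on $p\bmod 4$ (the mod-$8$ split being cosmetic, for consistency with the rest of the paper) is accurate. What your route buys is independence from the external reference and a formula that exhibits the mechanism behind the counts; what the paper's route buys is brevity, since the lemma is only used as input to Theorem \ref{thm:master-theorem}. If you wanted to tie your argument even more closely to the paper's toolkit, you could alternatively have recovered $|A_{00}|$ from Lemma \ref{lem:A00-generation} (the four-to-one generation of $A_{00}$ by the list of length $p-1$, corrected for the occurrences of $0$ and $-1$), but the character-sum computation is cleaner and covers all four sets at once.
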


Now, we are ready to prove the theorem that computes the sum.

\begin{theorem}\label{thm:master-theorem}
Let $p\ge 7$ be a prime. Then
$$\sum{R(x^4 + cx^2 + e)} \equiv Vc^2 + We \pmod{p},$$ where
we compute $V$ and $W$, using the collected data in the table beneath, as
\begin{align*}
    V&\equiv \begin{cases}
        \hfill \sum{R_{a\in A_{0}'}(a^2 +a)} &\text{ if } \leg{c}{p} = 1\\
        \hfill \sum{R_{a\in A_{1}'}(a^2 +a)} &\text{ if } \leg{c}{p} = -1\\
    \end{cases}\\
    &\equiv -\frac{1}{2}\cdot \left[\sum_{a\in A_{ij}}{(a^2 + a)}-\left(-\frac{1}{4} \text{ if ``Yes''}\right)\right] \pmod{p},
\end{align*}
and
\begin{align*}
    W&\equiv \begin{cases}
        \hfill |R_{a\in A_{0}'}(a^2 +a)| &\text{ if } \leg{c}{p} = 1\\
        \hfill |R_{a\in A_{1}'}(a^2 +a)| &\text{ if } \leg{c}{p} = -1\\
    \end{cases}\\
    &\equiv |A_i\backslash\{-1\}| - \frac{1}{2}\cdot\left[|A_{ij}|-\left(1 \text{ if ``Yes''}\right)\right]+1 \pmod{p}.
\end{align*}

$$\begin{array}{c|r|c|c|c|c|c|c}
    p & \leg{c}{p} & A_i & A_{ij} & \frac{p-1}{2}\in A_{ij} & \sum_{a\in A_{ij}}{(a^2 +a)} & |A_{ij}| & |A_i\backslash\{-1\}|\\ \hline
    1 & 1 & A_0 & A_{00} & \text{Yes} & \frac{1}{32} & \frac{p-5}{4} & \frac{p-3}{2}\\
    3 & 1 & A_0 & A_{01} & \text{Yes} &-\frac{1}{32} & \frac{p+1}{4} & \frac{p-1}{2}\\
    5 & 1 & A_0 & A_{00} & \text{No}  & \frac{1}{32} & \frac{p-5}{4} & \frac{p-3}{2}\\
    7 & 1 & A_0 & A_{01} & \text{No}  &-\frac{1}{32} & \frac{p+1}{4} & \frac{p-1}{2}\\
    1 &-1 & A_1 & A_{11} & \text{No}  & \frac{1}{32} & \frac{p-1}{4} & \frac{p-1}{2}\\
    3 &-1 & A_1 & A_{10} & \text{No}  &-\frac{1}{32} & \frac{p-3}{4} & \frac{p-3}{2}\\ 
    5 &-1 & A_1 & A_{11} & \text{Yes} & \frac{1}{32} & \frac{p-1}{4} & \frac{p-1}{2}\\
    7 &-1 & A_1 & A_{10} & \text{Yes} &-\frac{1}{32} & \frac{p-3}{4} & \frac{p-3}{2}
\end{array}$$
\end{theorem}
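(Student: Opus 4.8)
The plan is to ride the three reductions laid out just before Lemma~\ref{lem:faulhaber}. Those reductions already show that $\sum R(x^4+cx^2+e)\equiv c^2 V + eW$ with $V=\sum R_{a\in A_i'}(a^2+a)$ and $W=|R_{a\in A_i'}(a^2+a)|$, where $i=0$ when $\leg{c}{p}=1$ and $i=1$ when $\leg{c}{p}=-1$; this is exactly the first piecewise line of each of the two displayed formulas, and it fixes the ``$A_i$'' column of the table. Everything that remains is to understand the multiset $\{a^2+a:a\in A_i'\}$ well enough to count and sum its distinct elements, and then to substitute numbers from the lemmas already proven.

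First I would isolate the output $0$: by Lemma~\ref{lem:overcounting-classification}, $a^2+a\equiv 0$ for $a\in A_i'$ exactly when $a\equiv 0,-1$, so $0$ is always an output (from $a=0$), and $-1$ contributes to the subset $A_i\setminus\{-1\}$ of remaining inputs only when $-1\in A_i$, i.e.\ when $\leg{-1}{p}=1$ (Lemma~\ref{lem:1st-supplement}), which fixes the ``$|A_i\setminus\{-1\}|$'' column. For a nonzero output $v=a^2+a$ with $a\in A_i\setminus\{-1\}$, Lemma~\ref{lem:overcounting-classification} says its preimage in $\mathbb{Z}_p$ is $\{a,-a-1\}$, and both of these automatically avoid $\{0,-1\}$. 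Hence $v$ is produced by two distinct inputs from $A_i\setminus\{-1\}$ precisely when $-a-1\in A_i$ and $-a-1\neq a$. Writing $\leg{-a-1}{p}=\leg{-1}{p}\leg{a+1}{p}$ and using Lemma~\ref{lem:1st-supplement}, the condition $-a-1\in A_i$ becomes $a+1\in A_j$ where $(-1)^j=(-1)^i\leg{-1}{p}$: so $\leg{c}{p}=1$ gives $A_{00}$ or $A_{01}$ according as $p\equiv 1$ or $3\pmod 4$, and $\leg{c}{p}=-1$ gives $A_{11}$ or $A_{10}$ correspondingly, which is the ``$A_{ij}$'' column. Excluding the sole solution $a\equiv -a-1$, namely $a\equiv\frac{p-1}{2}$, the inputs in $A_i\setminus\{-1\}$ that produce a repeated nonzero output are exactly those of $A_{ij}\setminus\{\tfrac{p-1}{2}\}$.

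Next I would note that $A_{ij}$ is stable under the involution $a\mapsto -a-1$ (a one-line Legendre-symbol check), whose only possible fixed point is $\frac{p-1}{2}$; so $A_{ij}\setminus\{\tfrac{p-1}{2}\}$ breaks into $\tfrac12(|A_{ij}|-\varepsilon)$ orbits of size two, where $\varepsilon=1$ if $\frac{p-1}{2}\in A_{ij}$ and $\varepsilon=0$ otherwise. Since $\frac{p-1}{2}\equiv -2^{-1}$ and $\frac{p+1}{2}\equiv 2^{-1}$, one has $\frac{p-1}{2}\in A_{ij}$ iff $\leg{-2}{p}=(-1)^i$ and $\leg{2}{p}=(-1)^j$, and Lemma~\ref{lem:plusminus-2-legendre} decides this in each of the eight cases, giving the ``$\frac{p-1}{2}\in A_{ij}$'' column. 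Each size-two orbit $\{a,-a-1\}$ contributes one redundant output, so the number of distinct outputs is $1+|A_i\setminus\{-1\}|-\tfrac12(|A_{ij}|-\varepsilon)$, which is the asserted formula for $W$. For the sum, each such orbit makes its common value $a^2+a$ appear twice in $\sum_{a\in A_i}(a^2+a)$ where it should appear once, so the sum of distinct outputs is $\sum_{a\in A_i}(a^2+a)-\tfrac12\sum_{a\in A_{ij}\setminus\{(p-1)/2\}}(a^2+a)$; because $\sum_{a\in A_i}(a^2+a)\equiv 0$ by Lemma~\ref{lem:faulhaber} (directly for $i=0$, and for $i=1$ by subtracting from $\sum_{a=1}^{p-1}(a^2+a)\equiv 0$) and $(\tfrac{p-1}{2})^2+\tfrac{p-1}{2}\equiv-\tfrac14$, this rearranges exactly to $V\equiv-\tfrac12[\sum_{a\in A_{ij}}(a^2+a)-(-\tfrac14\text{ if }\varepsilon=1)]$. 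Finally I would plug in the values of $\sum_{a\in A_{ij}}(a^2+a)$ from Lemma~\ref{lem:subtraction-sum-computations}, the sizes $|A_{ij}|$ from Lemma~\ref{lem:Lemmermeyer}, the $|A_i\setminus\{-1\}|$ from Lemma~\ref{lem:1st-supplement}, and the $\varepsilon$-data, completing all eight rows of the table and the explicit formulas.

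The \emph{main obstacle} is the careful bookkeeping around the exceptional element $a=\frac{p-1}{2}$: it is the unique fixed point of $a\mapsto -a-1$, so it belongs to the ``collision set'' $A_{ij}$ whenever its Legendre data permit, yet it generates no genuine repetition. Getting the correction term right requires both deleting it from $A_{ij}$ before halving the count and removing its output value $-\tfrac14$ from $\sum_{a\in A_{ij}}(a^2+a)$ before halving, and then correctly determining in which of the eight cases $\frac{p-1}{2}$ actually lies in $A_{ij}$ — which is precisely why the four-way split of Lemma~\ref{lem:plusminus-2-legendre}, requiring both $\leg{-2}{p}$ and $\leg{2}{p}$, is needed. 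A secondary point of care is the separate treatment of the outputs $0$ and $-1$, which lie outside every $A_{ij}$ but still affect the cardinality count.
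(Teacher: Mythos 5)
Your proposal is correct and follows essentially the same route as the paper's proof: reduce via the three reductions to sums/counts over $A_i'$, detect collisions through $b\equiv -a-1$ and the sets $A_{ij}$, halve the overcount while treating the fixed point $a\equiv\frac{p-1}{2}$ (value $-\tfrac14$) and the zero outputs from $a\equiv 0,-1$ separately, then substitute Lemmas \ref{lem:plusminus-2-legendre}, \ref{lem:subtraction-sum-computations}, and \ref{lem:Lemmermeyer}. Your involution/orbit bookkeeping is in fact a slightly more explicit rendering of the paper's column-by-column argument, but it is the same proof.
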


\begin{proof}
We will describe the columns, how they were obtained, and how they contribute to the various components of the formulas:
\begin{enumerate}
    \item The $p$ column indicates the remainder of $p$ modulo $8$.
    \item The $\leg{c}{p}$ column says whether $c$ is a quadratic residue or a quadratic non-residue modulo $p$.
    \item The $A_i$ column is $A_0$ if $\leg{c}{p}=1$ and is $A_1$ if $\leg{c}{p}=-1$. For the computation of $V$, we start with $\sum_{a\in A_i}(a^2 + a)\equiv 0$ and later subtract duplicate summands.
    \item The $A_{ij}$ column states the set in which belongs the $a\in \mathbb{Z}_p$ that resulted in non-trivial duplicate summands in $\sum_{a\in A_i}{(a^2 + a)}$. The index $i$ again corresponds to $\leg{c}{p}=(-1)^i$. The index $j$ is determined by the requirement $\leg{a+1}{p}=(-1)^j$, which is found using Lemma \ref{lem:overcounting-classification}.
    \item The $\frac{p-1}{2}\in A_{ij}$ column determines whether $a=\frac{p-1}{2}$ needs to be omitted from $A_{ij}$ when duplicates are removed from $A_i$. As explained before Lemma \ref{lem:plusminus-2-legendre}, this is because the pair $(a,a+1)$ and $(b,b+1) = (-a-1,-a)$ are actually the same modulo $p$. So, even if $a=\frac{p-1}{2}\in A_{ij}$, it has not been included twice originally and needs not be subtracted. This column was computed using Lemma \ref{lem:plusminus-2-legendre}.
    \item The $\sum_{a\in A_{ij}}{(a^2 +a)}$ column was computed in Lemma \ref{lem:subtraction-sum-computations}. The factors of $\frac{1}{2}$ in $V$ and $W$ come from the fact that we want to remove only one of each pair $(a,a+1)$ and $(b,b+1) = (-a-1,-a)$. Note that the $-\frac{1}{4}$ in $V$ comes from $\left(\frac{p-1}{2}\right)\cdot \left(\frac{p+1}{2}\right) \equiv -\frac{1}{4}.$
    \item The $|A_{ij}|$ column follows directly from Lemma \ref{lem:Lemmermeyer}.
    \item The $|A_i\backslash\{-1\}|$ column was found by considering whether $a=-1$ was already in $A_i$, using Lemma \ref{lem:1st-supplement}. If so, $1$ was subtracted from $|A_i| = \frac{p-1}{2}$ The reason that it is removed is that $a+1=0$, and we want to avoid the pairs $(a,a+1) = (-1,0)$ and $(b,b+1) = (-a-1,-a)= (0,1)$ in the initial count; $b=0$ is not in $A_i$ by definition. One of $a=0,-1$ is placed back into the computation of $W$ with the $+1$ on the far right of $W$.
\end{enumerate}
Therefore, the general idea behind both the sum computation and the cardinality computation is to find it with repetition among the summands or elements, then remove repeated ones (which can only exist in dual pairs), while considering whether the middle terms of $\mathbb{Z}_p$ form such a dual pair.
\end{proof}

\begin{corollary}\label{cor:sum-of-residues}
Theorem \ref{thm:master-theorem}, works out explicitly as follows, as predicted by the authors of \cite[p.~7]{Harrington2}.
$$
\begin{array}{c|r|r|r}
    p\pmod{8} & \leg{c}{p} & V & W \\ \hline
    1 & 1  & -\frac{9}{64} & \frac{5}{8} \\
    3 & 1  & -\frac{7}{64} & \frac{7}{8} \\
    5 & 1  & -\frac{1}{64} & \frac{1}{8} \\
    7 & 1  & \frac{1}{64}  & \frac{3}{8} \\
    1 & -1 & -\frac{1}{64} & \frac{5}{8} \\
    3 & -1 & \frac{1}{64}  & -\frac{1}{8} \\
    5 & -1 & -\frac{9}{64} & \frac{9}{8} \\
    7 & -1 & -\frac{7}{64} & \frac{3}{8}
\end{array}
$$
\end{corollary}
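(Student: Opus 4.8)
The plan is to verify the corollary by direct substitution into the formulas for $V$ and $W$ furnished by Theorem~\ref{thm:master-theorem}, working through the eight rows of the accompanying table one at a time. Each row fixes the residue of $p$ modulo $8$ together with the value of $\leg{c}{p}$, and this pins down the set $A_i$, the associated set $A_{ij}$, the ``Yes''/``No'' status of $\frac{p-1}{2}\in A_{ij}$, and the three tabulated quantities $\sum_{a\in A_{ij}}(a^2+a)$, $|A_{ij}|$, and $|A_i\backslash\{-1\}|$. The ``Yes''/``No'' entry merely controls whether the correction $-\frac14$ appears inside the bracket defining $V$ and whether the $1$ is subtracted inside the bracket defining $W$, so the corollary is nothing more than the instantiation of the abstract formulas in each of the eight configurations.

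For $V$ each case collapses to a single line of rational arithmetic. For example, in the first row ($p\equiv 1\pmod 8$, $\leg{c}{p}=1$) the table supplies $\sum_{a\in A_{ij}}(a^2+a)\equiv\frac{1}{32}$ with a ``Yes'', so $V\equiv-\frac12\left(\frac{1}{32}+\frac14\right)=-\frac{9}{64}$; each of the other seven entries is obtained the same way, combining $\pm\frac{1}{32}$ with either $0$ or $\frac14$ and then halving. For $W$ one first substitutes the integer-valued entries $|A_i\backslash\{-1\}|$ and $|A_{ij}|$, which are linear in $p$, and then reduces modulo $p$, using $\frac{p-3}{2}\equiv-\frac32$, $\frac{p-1}{2}\equiv-\frac12$, $\frac{p-5}{8}\equiv-\frac58$, and so on, finally collecting the terms over the common denominator $8$. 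Carrying this out row by row reproduces exactly the eight pairs $(V,W)$ listed.

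The only step demanding genuine care is confirming that the $|A_i\backslash\{-1\}|$ column has been populated correctly: by Lemma~\ref{lem:1st-supplement}, $-1\in A_0$ precisely when $p\equiv 1,5\pmod 8$ and $-1\in A_1$ precisely when $p\equiv 3,7\pmod 8$, so one must check in each row that $1$ has been removed from $|A_i|=\frac{p-1}{2}$ exactly when $-1$ actually belongs to $A_i$. Once this bookkeeping, the ``Yes''/``No'' conditionals, and the sign of the outer factor $-\frac12$ in $V$ are all kept straight, no real obstacle remains; the corollary is a finite and mechanical computation, and I would present it simply as the completed list of resulting values of $V$ and $W$.
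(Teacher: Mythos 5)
Your proposal is correct and follows essentially the same route as the paper: the paper likewise proves the corollary by substituting the tabulated data from Theorem~\ref{thm:master-theorem} into the formulas for $V$ and $W$, carrying out the first row explicitly (e.g.\ $V\equiv-\frac12\bigl(\frac{1}{32}+\frac14\bigr)\equiv-\frac{9}{64}$ and $W\equiv\frac{p-3}{2}-\frac12\bigl(\frac{p-5}{4}-1\bigr)+1\equiv\frac58\pmod p$) and noting the remaining rows follow similarly. Your row-by-row mechanical verification, including the reduction of the $p$-linear cardinalities modulo $p$, is exactly the intended argument.
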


\begin{proof}
For illustrative purposes, we compute $V$ and $W$ in the first row. Following the formula and data in Theorem \ref{thm:master-theorem},
\begin{align*}
    V &\equiv -\frac{1}{2}\cdot \left[\sum_{a\in A_{ij}}{(a^2 + a)}-\left(-\frac{1}{4} \text{ if ``Yes''}\right)\right]\\
    &\equiv -\frac{1}{2}\cdot \left[\frac{1}{32}-\left(-\frac{1}{4}\right)\right] \equiv -\frac{9}{64}\pmod{p},\\
    W &\equiv |A_i\backslash\{-1\}| - \frac{1}{2}\cdot\left[|A_{ij}|-\left(1 \text{ if ``Yes''}\right)\right]+1\\
    &\equiv \frac{p-3}{2} - \frac{1}{2}\cdot\left(\frac{p-5}{4}-1\right)+1\equiv \frac{5}{8}\pmod{p}.
\end{align*}
The other rows follow similarly, completing the proof of the conjecture in \cite[p.~7]{Harrington2}.
\end{proof}

We acknowledge that the result extends easily to the non-monic case. For any leading coefficient $a\not\equiv 0\pmod{p}$,
$$R(ax^4 + cx^2 + e) = a\cdot R(x^4 + a^{-1}cx^2 + a^{-1}e).$$
So, beyond fixing the remainder of $p$ modulo $8$, the criteria for casework becomes whether $\leg{a^{-1}c}{p}=\leg{a}{p}\leg{c}{p}$ is $1$ or $-1$. This tacks on one copy of $a$ in the denominator of the first term of each case. For example, if $p\equiv 1\pmod{8}$ and $\leg{a}{p}\leg{c}{p} =1$, then
\begin{align*}
    \sum{R(ax^4 + cx^2 + e)} &\equiv a\cdot\sum{R(x^4 + a^{-1}cx^2 + a^{-1}e)}\\
    &\equiv a\cdot \left(\frac{-9a^{-2}c^2}{64}+\frac{5a^{-1}e}{8}\right)\equiv \frac{-9c^2}{64a}+\frac{5e}{8}.
\end{align*}

\section{The Set of Sums}

Now we will compute $S(p)$, as defined in Definition \ref{def:set-of-sum-of-residues}, in terms of $R_p (x^2)$ and $R_p (x^4)$. We start by reorganizing Corollary \ref{cor:sum-of-residues} as follows.

\begin{lemma}\label{lem:sum-summary}
For $f(x) = x^4 + cx^2 + e$, where $c$ and $e$ are integers such that $p\nmid c$, $\sum{R(f)}$ may be computed across eight cases as follows, according to the remainder of $p$ modulo $8$, and whether $\leg{c}{p} = 1$ or $-1$.

$$
\begin{array}{c|r|r}
    p\pmod{8} & \leg{c}{p} = 1 & \leg{c}{p} = -1 \\ \hline
    3 & -\frac{7}{64}c^2 +\frac{7}{8}e & \frac{1}{64}c^2 -\frac{1}{8}e \\
    7 & \frac{1}{64}c^2 +\frac{3}{8}e & -\frac{7}{64}c^2 +\frac{3}{8}e \\
    1 & -\frac{9}{64}c^2 +\frac{5}{8}e & -\frac{1}{64}c^2 +\frac{5}{8}e \\
    5 & -\frac{1}{64}c^2 +\frac{1}{8}e & -\frac{9}{64}c^2 +\frac{9}{8}e \\
\end{array}
$$
For $p\mid c$, \cite[p.~4]{Harrington2} proves that $$\sum{R(x^4)} \equiv 0 \pmod{p}.$$
\end{lemma}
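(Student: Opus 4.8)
The plan is to derive Lemma \ref{lem:sum-summary} as a direct restatement of Corollary \ref{cor:sum-of-residues}, which has already done essentially all the work. Since Theorem \ref{thm:master-theorem} gives $\sum{R(x^4 + cx^2 + e)} \equiv Vc^2 + We \pmod{p}$, and Corollary \ref{cor:sum-of-residues} tabulates the eight pairs $(V,W)$ according to $p \bmod 8$ and $\leg{c}{p}$, I would simply substitute each $(V,W)$ pair into the expression $Vc^2 + We$ to produce the entries of the table. For instance, the row $p\equiv 3\pmod 8$, $\leg{c}{p}=1$ has $(V,W) = \left(-\frac{7}{64}, \frac{7}{8}\right)$, giving $-\frac{7}{64}c^2 + \frac{7}{8}e$, and so on for the remaining seven cases. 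The reordering of rows (placing $p \equiv 3, 7, 1, 5$ in that sequence rather than the $1,3,5,7$ order of the earlier tables) is purely cosmetic and requires no argument — it anticipates the grouping used in the next section, where the $p\equiv 3\pmod 4$ cases and $p \equiv 1 \pmod 4$ cases will presumably be handled together.

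The second assertion, that $\sum{R(x^4)} \equiv 0 \pmod p$ when $p \mid c$, is explicitly attributed to \cite[p.~4]{Harrington2} and so may simply be quoted; no independent proof is needed here. If one wished to include a short self-contained justification, one could note that $R(x^4) = R_{a \in A_0'}(a^2)$ (composing $x^2$ with $x^2$), and then observe that every nonzero fourth power arises from exactly two elements of $A_0'$ among $\{a, -a\}$ unless $a \equiv -a$, so the sum $\sum_{a \in A_0'} a^2$ double-counts each nonzero fourth power; but since Lemma \ref{lem:faulhaber} already gives $\sum_{a \in A_0'} a^2 \equiv 0$, and in fact $\sum_{i=1}^{p-1} i^4 \equiv 0$ directly handles the de-duplicated sum, one reaches $0$ either way. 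This is a minor remark and I would keep the proof to a one-line citation.

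There is essentially no obstacle: the lemma is bookkeeping, consolidating and re-presenting Corollary \ref{cor:sum-of-residues} in the form most convenient for Section 3. The only thing to be careful about is transcription accuracy — making sure each of the eight entries $Vc^2 + We$ is copied correctly from the corresponding row of the corollary's table, with signs and the $\frac{1}{64}$ versus $\frac{9}{64}$ versus $\frac{7}{64}$ distinctions intact — since a single sign error would propagate into the $S(p)$ computation that follows. Accordingly, the proof I would write is just: ``This is an immediate reorganization of Corollary \ref{cor:sum-of-residues} via the identity $\sum{R(x^4+cx^2+e)} \equiv Vc^2 + We$ of Theorem \ref{thm:master-theorem}, together with the cited result of \cite[p.~4]{Harrington2} for the case $p \mid c$.''
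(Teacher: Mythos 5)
Your proposal is correct and matches the paper exactly: the paper offers no separate proof for this lemma, treating it precisely as you do --- a transcription of the eight $(V,W)$ pairs from Corollary \ref{cor:sum-of-residues} into the formula $\sum{R(x^4+cx^2+e)}\equiv Vc^2+We$ of Theorem \ref{thm:master-theorem}, with the $p\mid c$ case quoted from \cite[p.~4]{Harrington2}. All eight entries you describe agree with the corollary's table, so nothing further is needed.
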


We will need some results about quartic residues.

\begin{definition}
For our purposes, we define the power residue symbol for integers $a$ and $k\ge 2$ as
$$
\leg[k]{a}{p} =
\begin{cases}
    \hfill 1 &\text{ if } x^k \equiv a \pmod{p} \text{ has a non-zero solution } x\\
    \hfill 0 &\text{ if } a\equiv 0\pmod{p}\\
    \hfill -1 &\text{ otherwise}
\end{cases}.
$$

If $k=2$, we drop the subscript of $2$ from the power residue symbol $\leg[2]{a}{p}=\leg{a}{p}$, as is typical for the Legendre symbol, as defined in Definition \ref{def:Legendre-symbol}.
\end{definition}

\begin{lemma}\label{lem:-1-quartic-residue}
According to Gauss \cite[\S~3]{Gauss-biquadratic1}, if $p\equiv 3\pmod{4}$ and $a$ is an integer, then
$$
\leg[4]{a}{p} = 1 \iff \leg{a}{p} = 1.
$$
As a result of Lemma \ref{lem:1st-supplement}, if $p\equiv 3\pmod{4}$, then
$$\leg[4]{-1}{p} = -1.$$
Also according to Gauss \cite[\S~10]{Gauss-biquadratic1}, if $p\equiv 1\pmod{4}$, then there are two cases
$$
\leg[4]{-1}{p}
=
\begin{cases}
    \hfill 1 &\text{ if } p\equiv 1\pmod{8}\\
    \hfill -1 &\text{ if } p \equiv 5\pmod{8}
\end{cases}.
$$
\end{lemma}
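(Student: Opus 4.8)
The statement to prove is Lemma~\ref{lem:-1-quartic-residue}, which characterizes when $-1$ is a fourth power modulo $p$. My plan is to reduce everything to Euler's criterion for $k$-th power residues. Recall that for a prime $p$ and $k \mid p-1$, an integer $a$ with $p\nmid a$ satisfies $\leg[k]{a}{p}=1$ if and only if $a^{(p-1)/k}\equiv 1 \pmod p$; this follows because the multiplicative group $(\mathbb{Z}/p\mathbb{Z})^\times$ is cyclic of order $p-1$, so the $k$-th powers form the unique subgroup of index $k$, which is precisely the kernel of the map $a\mapsto a^{(p-1)/k}$.

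For the first claim, suppose $p\equiv 3\pmod 4$. Then $4\nmid p-1$, but $2\mid p-1$, and in fact $\gcd(4,p-1)=2$. The set of fourth powers in a cyclic group of order $p-1$ coincides with the set of $\gcd(4,p-1)=2$-nd powers, i.e.\ with the quadratic residues. Hence $\leg[4]{a}{p}=1 \iff \leg{a}{p}=1$, which is exactly the stated equivalence attributed to Gauss. Combining this with Lemma~\ref{lem:1st-supplement}, which gives $\leg{-1}{p}=-1$ when $p\equiv 3\pmod 4$ (i.e.\ $p\equiv 3,7\pmod 8$), we immediately conclude $\leg[4]{-1}{p}=-1$ in this case.

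For the second claim, suppose $p\equiv 1\pmod 4$, so $4\mid p-1$ and Euler's criterion applies directly: $\leg[4]{-1}{p}=1 \iff (-1)^{(p-1)/4}\equiv 1\pmod p$. Since $-1$ is an actual integer, $(-1)^{(p-1)/4}$ equals $1$ or $-1$ in $\mathbb{Z}$ according to the parity of $(p-1)/4$, and these two integer values are distinct modulo $p$ because $p\ge 7>2$. Thus $\leg[4]{-1}{p}=1$ iff $(p-1)/4$ is even, i.e.\ iff $8\mid p-1$, i.e.\ iff $p\equiv 1\pmod 8$; and $\leg[4]{-1}{p}=-1$ iff $(p-1)/4$ is odd, i.e.\ iff $p\equiv 5\pmod 8$. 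This is the desired case split.

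The only mild subtlety is justifying the equivalence in the $p\equiv 3\pmod 4$ case without invoking Euler's criterion for $k=4$ (which does not apply since $4\nmid p-1$): one argues instead via $\gcd(4,p-1)$ as above, using that in a cyclic group $\langle g\rangle$ of order $n$, the subgroup of $k$-th powers equals the subgroup of $\gcd(k,n)$-th powers. Alternatively, since the paper explicitly cites Gauss for both halves, I can simply quote Gauss \cite[\S~3, \S~10]{Gauss-biquadratic1} for the group-theoretic facts and supply only the short deductions from Lemma~\ref{lem:1st-supplement} and the parity computation; there is no real obstacle here, just the bookkeeping of which residue classes mod $8$ give even versus odd $(p-1)/4$.
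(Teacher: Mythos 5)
Your proof is correct, but it takes a genuinely different route from the paper: the paper offers no argument at all for this lemma, simply quoting Gauss \cite[\S~3, \S~10]{Gauss-biquadratic1} for the equivalence of quartic and quadratic residues when $p\equiv 3\pmod{4}$ and for the value of $\leg[4]{-1}{p}$ when $p\equiv 1\pmod{4}$, with only the short deduction from Lemma \ref{lem:1st-supplement} left implicit. You instead supply a self-contained elementary proof: for $p\equiv 3\pmod{4}$ you use that in the cyclic group $(\mathbb{Z}/p\mathbb{Z})^{\times}$ of order $p-1$ the fourth powers coincide with the $\gcd(4,p-1)=2$-nd powers, hence with the quadratic residues, and then apply $\leg{-1}{p}=-1$; for $p\equiv 1\pmod{4}$ you apply Euler's criterion for fourth-power residues, reducing the claim to the parity of $(p-1)/4$, which cleanly separates $p\equiv 1\pmod{8}$ from $p\equiv 5\pmod{8}$. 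Both steps are sound, including your care in noting that Euler's criterion for $k=4$ requires $4\mid p-1$ and so cannot be used directly in the $p\equiv 3\pmod{4}$ case. What your version buys is independence from the historical citation, at the cost of a little group theory; what the paper's version buys is brevity, since the lemma is only used as an ingredient in Theorems \ref{thm:3-mod-4} and \ref{thm:1-mod-4}.
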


\begin{theorem}\label{thm:3-mod-4}
Let $p\equiv 3\pmod{4}$. Then
$$
S(p) =
\begin{cases}
    \hfill \mathbb{Z}_p &\text{ if } -1\in S(p)\\
    \hfill R (x^2) &\text{ if } -1 \not\in S(p)
\end{cases}.
$$
\end{theorem}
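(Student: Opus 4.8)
The plan is to read $S(p)$ directly off Lemma~\ref{lem:sum-summary} after setting $e\equiv 0$, recognize it as a union of dilates of $A_0$, and then decide the outcome from the quadratic character of $-7$ modulo $p$.

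Specializing Lemma~\ref{lem:sum-summary} to $f(x)=x^4+cx^2$, the sum $\sum R(x^4+cx^2)$ is $0$ when $p\mid c$, and for $p\nmid c$ it equals $-\frac{7}{64}c^2$ or $\frac{1}{64}c^2$; the two multipliers $-\frac{7}{64}$ and $\frac{1}{64}$ each occur in exactly one of the sign-classes $\leg{c}{p}=\pm 1$ (the assignment is swapped between $p\equiv 3$ and $p\equiv 7\pmod 8$, but the unordered pair of multipliers is the same). Here the hypothesis $p\equiv 3\pmod 4$ is used: by Lemma~\ref{lem:1st-supplement} we have $\leg{-1}{p}=-1$, so the two square roots $\pm c$ of any $q\in A_0$ carry opposite Legendre symbols, hence each $q\in A_0$ has exactly one square root in $A_0$ and exactly one in $A_1$. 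Thus $\{c^2 : c\in A_0\}=A_0$ and $\{c^2 : c\in A_1\}=A_0$; that is, within each nonzero sign-class the quantity $c^2$ runs over all of $A_0$ as $c$ ranges over $0,1,\ldots,p-1$. Therefore
$$S(p)=\{0\}\cup\left(-\frac{7}{64}\cdot A_0\right)\cup\left(\frac{1}{64}\cdot A_0\right).$$

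Next I would simplify the two dilates. Since $64=8^2\in A_0$, Lemma~\ref{lem:closure-cosets} gives $\frac{1}{64}\cdot A_0=A_0$ and $-\frac{7}{64}\cdot A_0=(-7)\cdot A_0$, so $S(p)=\{0\}\cup A_0\cup\bigl((-7)\cdot A_0\bigr)$. If $\leg{-7}{p}\in\{0,1\}$ — the value $0$ occurring only for $p=7$, where $(-7)\cdot A_0=\{0\}$, and the value $1$ making $(-7)\cdot A_0=A_0$ by Lemma~\ref{lem:closure-cosets} — then $S(p)=A_0\cup\{0\}$. If instead $\leg{-7}{p}=-1$, then $(-7)\cdot A_0=A_1$ and $S(p)=A_0\cup A_1\cup\{0\}=\mathbb{Z}_p$. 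So $S(p)$ is always exactly one of $A_0\cup\{0\}$ or $\mathbb{Z}_p$.

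To finish, observe that $R(x^2)=A_0\cup\{0\}$ by the definition of $R$, while Lemma~\ref{lem:1st-supplement} gives $-1\in A_1$, so $-1\notin A_0\cup\{0\}$ and $-1\in\mathbb{Z}_p$. Hence $S(p)=\mathbb{Z}_p$ exactly when $-1\in S(p)$, and $S(p)=R(x^2)$ exactly when $-1\notin S(p)$, which is the claimed formula. The step I expect to be the main obstacle is the reduction that produces the displayed formula for $S(p)$: one must check carefully that each of the two sign-classes $\leg{c}{p}=\pm 1$ already produces every element of $A_0$ in the form $c^2$. This is precisely where $p\equiv 3\pmod 4$ is essential — for $p\equiv 1\pmod 4$ the image would be only the fourth powers, so $R(x^4)$ (which then differs from $R(x^2)$) would have to enter the answer, which is why that case must be treated separately.
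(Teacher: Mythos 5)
Your proposal is correct, and it reaches the same intermediate object as the paper — $S(p)$ expressed as $\{0\}$ together with a union of two dilates of $A_0$, with the final dichotomy governed by the quadratic character of $7$ (equivalently $-7$) — but your execution is leaner in three respects. First, you bypass the paper's separate $p\equiv 3$ and $p\equiv 7\pmod 8$ computations (which hinge on $\leg{2}{p}$ and the $2^{-1}\cdot 2^{-2}$ coset manipulations of Lemma~\ref{lem:closure-cosets}) by observing that the unordered pair of multipliers $\{-\tfrac{7}{64},\tfrac{1}{64}\}$ is the same in both rows and that $64=8^2\in A_0$, so the mod-$8$ split never matters. Second, where the paper invokes Gauss's result (Lemma~\ref{lem:-1-quartic-residue}) to identify quartic residues with quadratic residues for $p\equiv 3\pmod 4$, you prove the needed fact $\{c^2 : c\in A_0\}=\{c^2 : c\in A_1\}=A_0$ elementarily, from $\leg{-1}{p}=-1$ forcing the two square roots $\pm c$ of each $q\in A_0$ into opposite classes; this is a genuinely more self-contained route to the same reduction. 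Third, your endgame differs in organization: rather than first proving $-1\in S(p)\iff 7\in A_0$ and then doing casework (with $p=7$ handled separately, as the paper does), you show $S(p)$ is always one of the two sets $A_0'=R(x^2)$ or $\mathbb{Z}_p$, which are distinguished by whether they contain $-1$ (using $-1\in A_1$); the $\leg{-7}{p}=0$ case cleanly absorbs $p=7$. The paper's approach yields slightly more explicit information along the way (the criterion $7\in A_0$, used again in the subsequent corollary on congruence classes mod $28$), while yours minimizes casework and external input; both are sound.
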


\begin{proof}
Suppose $p\equiv 3\pmod 8$. By the first row of Lemma \ref{lem:sum-summary}, the fact that $2\in A_{1}$ from Lemma \ref{lem:2nd-supplement}, and applying Lemma \ref{lem:closure-cosets} to multiply $A_{0}$ or $A_{1}$ by $2^{-2}\in A_{0}$ or $2^{-1}\in A_{1}$,
\begin{align*}
    S(p) &= R_{c\in A_{0}}\left(-\frac{7}{64}c^2\right)\cup R_{c\in A_{1}}\left(\frac{1}{64}c^2\right)\cup \{0\}\\
    &= R_{c\in A_{0}}\left(-7(2^{-1}\cdot 2^{-2}\cdot c)^2\right)\cup R_{c\in A_{1}}\left((2^{-1}\cdot 2^{-2}\cdot c)^2\right)\cup \{0\}\\
    &= R_{c\in A_{0}}\left(-7(2^{-1}\cdot c)^2\right)\cup R_{c\in A_{1}}\left((2^{-1}\cdot c)^2\right)\cup \{0\}\\
    &= R_{c\in A_{1}}\left(-7c^2\right)\cup R_{c\in A_{0}}\left(c^2\right)\cup \{0\}.
\end{align*}

Now suppose $p\equiv 7\pmod 8$. By the second row of Lemma \ref{lem:sum-summary}, the fact that $2\in A_{0}$ by Lemma \ref{lem:2nd-supplement}, and a usage of Lemma \ref{lem:closure-cosets} that is similar to the previous case,
\begin{align*}
    S(p) &= R_{c\in A_{0}}\left(\frac{1}{64}c^2\right)\cup R_{c\in A_{1}}\left(-\frac{7}{64}c^2\right)\cup \{0\}\\
    &= R_{c\in A_{0}}\left((2^{-1}\cdot 2^{-2}\cdot c)^2\right)\cup R_{c\in A_{1}}\left(-7(2^{-1}\cdot 2^{-2}\cdot c)^2\right)\cup \{0\}\\
    &= R_{c\in A_{0}}\left((2^{-1}\cdot c)^2\right)\cup R_{c\in A_{1}}\left(-7(2^{-1}\cdot c)^2\right)\cup \{0\}\\
    &= R_{c\in A_{0}}\left(c^2\right)\cup R_{c\in A_{1}}\left(-7c^2\right)\cup \{0\}.
\end{align*}

Thus, for both $p\equiv 3,7\pmod{8}$, and so for $p\equiv 3\pmod{4}$ in general,
$$S(p) = R_{c\in A_{0}}\left(c^2\right)\cup R_{c\in A_{1}}\left(-7c^2\right)\cup \{0\}.$$
By Lemma \ref{lem:closure-cosets}, $A_{1}=-A_{0}$. Separately, since $x^4 = (x^2)^2$, the set of quartic residues is the set of squares of quadratic residues; in the case of $p\equiv 3\pmod{4}$, Lemma \ref{lem:-1-quartic-residue} says that the set of quartic residues is also the set of quadratic residues. Then the set of squares of quadratic residues is the set of quadratic residues. So we get
\begin{align*}
    S(p) &= R_{c\in A_{0}}\left(c^2\right)\cup R_{c\in A_{0}}\left(-7(-c)^2\right)\cup \{0\}\\
    &= R_{c\in A_{0}}\left(c^2\right)\cup R_{c\in A_{0}}\left(-7c^2\right)\cup \{0\}\\
    &= R_{c\in A_{0}}\left(c\right)\cup R_{c\in A_{0}}\left(-7c\right)\cup \{0\}.
\end{align*}
Recall that, since it is assumed that $p\equiv 3\pmod{4}$, Lemma \ref{lem:1st-supplement} says that $-1\in A_{1}$. As a result, in the above representation of $S(p)$, we see that $-1\not\in R_{c\in A_{0}}\left(c\right)$, so $$-1\in S(p) \iff -1 \in R_{c\in A_{0}}\left(-7c\right).$$
This biconditional statement is equivalent to $7$ having an inverse that is in $A_{0}$, which is equivalent to $7$ itself being in $A_{0}$. Therefore, using $-1\in A_{1}$:
\begin{enumerate}
    \item If $-1\in S(p)$, then $7\in A_{0}$, so
    \begin{align*}
        S(p) &= R_{c\in A_{0}}\left(c\right)\cup R_{c\in A_{0}}\left(-7c\right)\cup \{0\}\\
        &= R_{c\in A_{0}}\left(c\right)\cup R_{c\in A_{0}}\left(-c\right)\cup \{0\}\\
        &= R_{c\in A_{0}}\left(c\right)\cup R_{c\in A_{1}}\left(c\right)\cup \{0\}\\
        &= \mathbb{Z}_p.
    \end{align*}
    \item If $-1\not\in S(p)$, then $7\in A_{1}$ or $p=7$. In the $7\in A_{1}$ case,
    \begin{align*}
        S(p) &= R_{c\in A_{0}}\left(c\right)\cup R_{c\in A_{0}}\left(-7c\right)\cup \{0\}\\
        &= R_{c\in A_{0}}\left(c\right)\cup R_{c\in A_{1}}\left(-c\right)\cup \{0\}\\
        &= R_{c\in A_{0}}\left(c\right)\cup R_{c\in A_{0}}\left(c\right)\cup \{0\}\\
        &= R (x^2).
    \end{align*}
    If $p=7$, then
    \begin{align*}
        S(p) &= R_{c\in A_{0}}\left(c\right)\cup R_{c\in A_{0}}\left(-7c\right)\cup \{0\}\\
        &= R_{c\in A_{0}}\left(c\right)\cup R_{c\in A_{0}}\left(0\right)\cup \{0\}\\
        &= R_{c\in A_{0}}\left(c\right)\cup \{0\}\\
        &= R (x^2).
    \end{align*}
    Either way, if $-1\not\in S(p)$, then $S(p) = R (x^2)$.
\end{enumerate}
\end{proof}

\begin{theorem}\label{thm:1-mod-4}
Let $p\equiv 1\pmod{4}$. Then
$$
S(p) =
\begin{cases}
    \hfill R (x^2) &\text{ if } -1\in S(p)\\
    \hfill R (x^4) &\text{ if } -1\not\in S(p),\; p\equiv 5\pmod{8}\\
    \hfill \left(R (x^2)\backslash R (x^4)\right)\cup\{0\} &\text{ if } -1 \not\in S(p),\; p\equiv 1\pmod{8}
\end{cases}.
$$
\end{theorem}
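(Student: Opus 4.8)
The plan is to follow the template of the proof of Theorem~\ref{thm:3-mod-4}. First I would apply Lemma~\ref{lem:sum-summary} with $e=0$ (the case $p\mid c$, i.e. $c\equiv 0$, contributes $\sum R(x^4)\equiv 0$, so it only adds $0$ to $S(p)$). Writing the relevant coefficient as $V$ when $\leg{c}{p}=1$ and $V'$ when $\leg{c}{p}=-1$, this gives
$$S(p)=\{0\}\cup R_{c\in A_0}(Vc^2)\cup R_{c\in A_1}(V'c^2),$$
where $(V,V')=\left(-\tfrac{9}{64},-\tfrac{1}{64}\right)$ if $p\equiv 1\pmod 8$ and $(V,V')=\left(-\tfrac{1}{64},-\tfrac{9}{64}\right)$ if $p\equiv 5\pmod 8$.

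Next I would record the structure of squares and fourth powers when $p\equiv 1\pmod 4$. Let $F=R(x^4)\setminus\{0\}$ be the set of nonzero fourth powers. Since $4\mid p-1$, the set $F$ is a subgroup of $\mathbb{Z}_p^{\times}$ of index $4$; it lies inside $A_0$ and $[A_0:F]=2$, so $A_0$ is the disjoint union of $F$ and $A_0\setminus F$, with $|A_0\setminus F|=\tfrac{p-1}{4}$. A short counting/coset argument gives $\{c^2:c\in A_0\}=F$, and — here using $-1\in A_0$, which holds because $p\equiv 1\pmod 4$ — also $\{c^2:c\in A_1\}=A_0\setminus F$; indeed, writing $A_1=sA_0$ with $s\in A_1$ one gets $\{c^2:c\in A_1\}=s^2F$, and $s^2\notin F$ since otherwise $s=\pm t^2\in A_0$. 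Substituting, $S(p)=\{0\}\cup V\cdot F\cup V'\cdot(A_0\setminus F)$; and since $-1,9,64$ are all squares, $V,V'\in A_0$, so $V\cdot F$ is one of $F$, $A_0\setminus F$, and likewise $V'\cdot(A_0\setminus F)$ is one of $A_0\setminus F$, $F$.

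Two facts then pin everything down: (i) $-\tfrac{1}{64}\in F$ for every prime $p\equiv 1\pmod 4$, which I would prove by taking a square root $i$ of $-1$ modulo $p$ and noting $(2+2i)^2\equiv 8i$, hence $(2+2i)^4\equiv -64$, so $-64\in F$ and therefore its inverse $-\tfrac{1}{64}\in F$; and (ii) $-1\in F$ if and only if $p\equiv 1\pmod 8$, which is exactly Lemma~\ref{lem:-1-quartic-residue}. Using (i): when $p\equiv 1\pmod 8$ we have $V'=-\tfrac{1}{64}\in F$, so $V'\cdot(A_0\setminus F)=A_0\setminus F$, and since $-1\in F$ by (ii), $-1\in S(p)\iff -1\in V\cdot F\iff V\in F$; when $p\equiv 5\pmod 8$ we have $V=-\tfrac{1}{64}\in F$, so $V\cdot F=F$, and since $-1\in A_0\setminus F$ by (ii), $-1\in S(p)\iff -1\in V'\cdot(A_0\setminus F)\iff V'\in F$. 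In each sub-case $S(p)$ is read off from the coset bookkeeping: $\{0\}\cup F\cup(A_0\setminus F)=R(x^2)$, $\{0\}\cup F\cup F=R(x^4)$, and $\{0\}\cup(A_0\setminus F)\cup(A_0\setminus F)=\left(R(x^2)\setminus R(x^4)\right)\cup\{0\}$. Matching these against whether $-1\in S(p)$ yields exactly the three cases in the statement.

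The main obstacle I anticipate is conceptual rather than computational: the answer is genuinely governed by the membership $-1\in S(p)$ rather than by a congruence on $p$, because unwinding the coset arithmetic shows $-1\in S(p)$ is equivalent to $\leg{3}{p}=1$ (as $-\tfrac{9}{64}=9\cdot\left(-\tfrac{1}{64}\right)$ with $-\tfrac{1}{64}\in F$, so $-\tfrac{9}{64}\in F\iff 3\in A_0$), a condition not determined by $p\bmod 8$. Getting the two-coset bookkeeping of $F$ and $A_0\setminus F$ exactly right — in particular the identity $\{c^2:c\in A_1\}=A_0\setminus F$, where the hypothesis $p\equiv 1\pmod 4$ is essential and is precisely what distinguishes this theorem from Theorem~\ref{thm:3-mod-4} — is the step requiring the most care.
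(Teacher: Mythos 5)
Your proposal is correct, and it reaches the theorem by a genuinely different route through the middle of the argument, even though the outer frame (Lemma \ref{lem:sum-summary} with $e=0$, plus the quartic character of $-1$ from Lemma \ref{lem:-1-quartic-residue}) is the same. The paper absorbs the factor $\frac{1}{64}$ using the second supplement (casework on whether $2\in A_0$ or $2\in A_1$ according to $p\bmod 8$) to reach the common form $R_{c\in A_{0}}\left(-(3c)^2\right)\cup R_{c\in A_{1}}\left(-c^2\right)\cup \{0\}$, then splits on $3\in A_0$ versus $3\in A_1$, and in the latter case writes $-1\equiv\gamma^2$ with $\gamma\in A_1$ or $\gamma\in A_0$ and cites Gauss for the identification of $R_{c\in A_{1}}\left(c^2\right)$ as the quadratic residues that are not quartic residues. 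You instead work in the index-$4$ subgroup $F$ of nonzero fourth powers and its two cosets inside $A_0$: your direct coset proof that $\{c^2:c\in A_0\}=F$ and $\{c^2:c\in A_1\}=A_0\setminus F$ (using $-1\in A_0$) is a self-contained replacement for the Gauss citation, and your identity $(2+2i)^4\equiv 8i\cdot 8i\equiv -64$, giving $-\tfrac{1}{64}\in F$ uniformly for $p\equiv 1\pmod 4$, eliminates the second-supplement manipulation of $2$ entirely; the residual dichotomy is then pure bookkeeping in the two-element quotient, and your observation that the trigger condition unwinds to $\leg{3}{p}=1$ matches the paper's criterion exactly. What each approach buys: yours is more structural and self-contained, with the mod-$8$ dependence isolated in the single fact about $\leg[4]{-1}{p}$; the paper's stays inside the quadratic-residue toolkit it has already built and outsources the quartic facts to Gauss. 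The only point you glossed over is that $2+2i\not\equiv 0$, which is automatic since $-64\not\equiv 0$ (or since $i\equiv -1$ would force $-1\equiv 1$), so it is not a gap.
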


\begin{proof}
Suppose $p\equiv 1\pmod 8$. By the third row of Lemma \ref{lem:sum-summary}, the fact that $2\in A_{0}$ from Lemma \ref{lem:2nd-supplement}, and using Lemma \ref{lem:closure-cosets} to multiply $A_{0}$ or $A_{1}$ by $2^{-2}\in A_{0}$ or $2^{-1}\in A_{0}$,
\begin{align*}
    S(p) &= R_{c\in A_{0}}\left(-\frac{9}{64}c^2\right)\cup R_{c\in A_{1}}\left(-\frac{1}{64}c^2\right)\cup \{0\}\\
    &= R_{c\in A_{0}}\left(-(3\cdot 2^{-1}\cdot 2^{-2}\cdot c)^2\right)\cup R_{c\in A_{1}}\left(-(2^{-1}\cdot 2^{-2}\cdot c)^2\right)\cup \{0\}\\
    &= R_{c\in A_{0}}\left(-(3\cdot 2^{-1}\cdot c)^2\right)\cup R_{c\in A_{1}}\left(-(2^{-1}\cdot c)^2\right)\cup \{0\}\\
    &= R_{c\in A_{0}}\left(-(3c)^2\right)\cup R_{c\in A_{1}}\left(-c^2\right)\cup \{0\}.
\end{align*}

Now suppose $p\equiv 5\pmod 8$. By the fourth row of Lemma \ref{lem:sum-summary}, the fact that $2\in A_1$ by Lemma \ref{lem:2nd-supplement}, and a usage of Lemma \ref{lem:closure-cosets} that is similar to the previous case,
\begin{align*}
    S(p) &= R_{c\in A_{0}}\left(-\frac{1}{64}c^2\right)\cup R_{c\in A_{1}}\left(-\frac{9}{64}c^2\right)\cup \{0\}\\
    &= R_{c\in A_{0}}\left(-(2^{-1}\cdot 2^{-2}\cdot c)^2\right)\cup R_{c\in A_{1}}\left(-(3\cdot 2^{-1}\cdot 2^{-2}\cdot c)^2\right)\cup \{0\}\\
    &= R_{c\in A_{0}}\left(-(2^{-1}\cdot c)^2\right)\cup R_{c\in A_{1}}\left(-(3\cdot 2^{-1}\cdot c)^2\right)\cup \{0\}\\
    &= R_{c\in A_{1}}\left(-c^2\right)\cup R_{c\in A_{0}}\left(-(3c)^2\right)\cup \{0\}.
\end{align*}

Thus, for both $p\equiv 1,5\pmod{8}$, and so for $p\equiv 1\pmod{4}$ in general,
$$S(p) = R_{c\in A_{0}}\left(-(3c)^2\right)\cup R_{c\in A_{1}}\left(-c^2\right)\cup \{0\}.$$

We claim that $-1\not\in R_{c\in A_{1}}\left(-c^2\right)$. This is because $$-1\in  R_{c\in A_{1}}\left(-c^2\right) \iff c^2 = 1 \iff c = \pm 1$$ for some $c\in A_{1}$. But both $1$ and $-1$ are in $A_{0}$ for $p\equiv 1\pmod{4}$, by Lemma \ref{lem:1st-supplement}. As a result, in the above representation of $S(p)$, we see that $-1\not\in  R_{c\in A_{1}}\left(-c^2\right)$, so
$$-1\in S(p) \iff -1\in R_{c\in A_{0}}\left(-(3c)^2\right).$$ This biconditional statement is equivalent to there existing a $c\in A_{0}$ such that $-(3c)^2=-1$, which is equivalent to $3c=1$ or $3c=-1$. In turn, since $-1\in A_{0}$ for $p\equiv \pmod{4}$, the possibility of $3c=-1$ can be absorbed into the possibility of $3c=1$, which is equivalent to $3\in A_{0}$. Therefore, using $-1\in A_{0}$:
\begin{enumerate}
    \item Suppose $p\equiv 1\pmod{4}$ and $-1\in S(p)$. Then $3\in A_{0}$ by above, so
    \begin{align*}
        S(p) &= R_{c\in A_{0}}\left(-(3c)^2\right)\cup R_{c\in A_{1}}\left(-c^2\right)\cup \{0\}\\
        &= R_{c\in A_{0}}\left(-c^2\right)\cup R_{c\in A_{1}}\left(-c^2\right)\cup \{0\}\\
        &= R_{c\in \mathbb{Z}_p}\left(-c^2\right)\\
        &= (-1)\cdot A_{0}' = A_{0}' = R (x^2).
    \end{align*}
    \item Suppose $p\equiv 1\pmod{4}$ and $-1\not\in S(p)$. Then $3\in A_{1}$ by above, so
    \begin{align*}
        S(p) &= R_{c\in A_{0}}\left(-(3c)^2\right)\cup R_{c\in A_{1}}\left(-c^2\right)\cup \{0\}\\
        &= R_{c\in A_{1}}\left(-c^2\right)\cup R_{c\in A_{1}}\left(-c^2\right)\cup \{0\}\\
        &= R_{c\in A_{1}}\left(-c^2\right)\cup \{0\}.
    \end{align*}
    \begin{enumerate}
        \item Suppose $p\equiv 5\pmod{8}$. Then $\leg[4]{-1}{p} = -1$, but $\leg{-1}{p} = 1$. The latter implies that $-1 \equiv \gamma^2\pmod{p}$ for some $\gamma\in \mathbb{Z}_p\backslash\{0\}$, and the former ensures that $\gamma \not \in A_{0}$, otherwise $-1$ would be the square of a quadratic residue, as in a quartic residue. So $\gamma \in A_{1}$, implying
        \begin{align*}
            S(p) &= R_{c\in A_{1}}\left(-c^2\right)\cup \{0\}\\
            &= R_{c\in A_{1}}\left(\gamma^2 c^2\right)\cup \{0\}\\
            &= R_{c\in A_{1}}\left((\gamma c)^2\right)\cup \{0\}\\
            &= R_{c\in A_{0}}\left(c^2\right)\cup \{0\}\\
            &= R (x^4).
        \end{align*}
        \item Suppose $p\equiv 1\pmod{8}$. Then $\leg[4]{-1}{p} = 1$. So $-1 \equiv \delta^4 \equiv (\delta^2)^2\pmod {p}$ for some $\delta\in \mathbb{Z}_p\backslash\{0\}$. Taking $\gamma = \delta^2$, we get that $\gamma \in A_{0}$ such that $\gamma^2 \equiv -1\pmod{p}$. Then
        \begin{align*}
            S(p) &= R_{c\in A_{1}}\left(-c^2\right)\cup \{0\}\\
            &= R_{c\in A_{1}}\left(\gamma^2 c^2\right)\cup \{0\}\\
            &= R_{c\in A_{1}}\left((\gamma c)^2\right)\cup \{0\}\\
            &= R_{c\in A_{1}}\left(c^2\right)\cup \{0\}.
        \end{align*}
        According to Gauss \cite[\S~4-7]{Gauss-biquadratic1}, $R_{c\in A_{1}}\left(c^2\right)$ is precisely the set of quadratic residues that are not quartic residues, which produces the desired representation $$S(p) = \left(R(x^2)\backslash R(x^4)\right)\cup\{0\}.$$
    \end{enumerate}
\end{enumerate}
\end{proof}

Theorem \ref{thm:3-mod-4} and Theorem \ref{thm:1-mod-4} maybe reformulated in terms of explicit congruence classes of $p$ as follows.

\begin{corollary}
For any prime $p\ge 7$,
$$
S(p) =
\begin{cases}
    \hfill \mathbb{Z}_p & \text{ if } p\equiv 3,19,27\pmod{28}\\
    \hfill R (x^2) & \text{ if } p\equiv 7,11,15,23\pmod{28}\\
    \hfill R (x^2) &\text{ if } p\equiv 1,13\pmod{24}\\
    \hfill R (x^4) &\text{ if } p\equiv 5,21\pmod{24}\\
    \hfill \left(R (x^2)\backslash R (x^4)\right)\cup\{0\} &\text{ if } p\equiv 9,17\pmod{24}
\end{cases},
$$
where $p\equiv 3\pmod{4}$ in the first two cases and $p\equiv 1\pmod{4}$ in the last three cases.
\end{corollary}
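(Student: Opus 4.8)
The plan is to turn the two branching criteria of Theorem~\ref{thm:3-mod-4} and Theorem~\ref{thm:1-mod-4} --- the residue of $p$ modulo $4$ and $8$, and whether $-1 \in S(p)$ --- into explicit congruence conditions on $p$, using the law of quadratic reciprocity together with its two supplements (Lemmas~\ref{lem:1st-supplement} and~\ref{lem:2nd-supplement}). The essential input is already isolated inside the proofs of those two theorems: membership of $-1$ in $S(p)$ is controlled by a single Legendre symbol, namely $-1 \in S(p) \iff \leg{7}{p} = 1$ when $p \equiv 3 \pmod{4}$ (with $p = 7$, where $\leg{7}{7} = 0$, landing on the side $-1 \notin S(p)$), and $-1 \in S(p) \iff \leg{3}{p} = 1$ when $p \equiv 1 \pmod{4}$. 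So the whole corollary comes down to rewriting the conditions $\leg{7}{p} = 1$ and $\leg{3}{p} = 1$ as unions of residue classes; the only step needing real care is precisely this reduction, i.e.\ extracting those biconditionals cleanly from the earlier case analyses and tracking the degenerate value at $p = 7$. The rest is a Chinese Remainder Theorem bookkeeping exercise.

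First I would treat $p \equiv 3 \pmod{4}$. For $p \ne 7$, since $(7-1)/2 = 3$ is odd, reciprocity gives $\leg{7}{p}\leg{p}{7} = (-1)^{(p-1)/2} = -1$, hence $\leg{7}{p} = 1 \iff \leg{p}{7} = -1 \iff p \equiv 3, 5, 6 \pmod{7}$. Combining each of these residues modulo $7$ with $p \equiv 3 \pmod{4}$ through CRT yields exactly $p \equiv 3, 19, 27 \pmod{28}$, and by Theorem~\ref{thm:3-mod-4} this is the case $S(p) = \mathbb{Z}_p$. The other residue classes modulo $28$ that are $\equiv 3 \pmod{4}$ are $p \equiv 7, 11, 15, 23 \pmod{28}$; there $-1 \notin S(p)$ (for $p = 7$ directly, otherwise because $\leg{7}{p} \ne 1$), so $S(p) = R(x^2)$. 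The class $7 \pmod{28}$ contains only the prime $7$ itself, consistent with its separate handling in Theorem~\ref{thm:3-mod-4}.

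Next I would treat $p \equiv 1 \pmod{4}$. Since $(3-1)/2 = 1$, reciprocity gives $\leg{3}{p}\leg{p}{3} = (-1)^{(p-1)/2} = 1$, so $\leg{3}{p} = 1 \iff \leg{p}{3} = 1 \iff p \equiv 1 \pmod{3}$. Therefore $-1 \in S(p) \iff p \equiv 1 \pmod{3}$, which together with $p \equiv 1 \pmod{4}$ is $p \equiv 1 \pmod{12}$, i.e.\ $p \equiv 1, 13 \pmod{24}$, giving $S(p) = R(x^2)$. When $-1 \notin S(p)$, equivalently $p \equiv 2 \pmod{3}$, I would further split on $p$ modulo $8$: intersecting $p \equiv 2 \pmod{3}$ with $p \equiv 5 \pmod{8}$ gives $p \equiv 5, 21 \pmod{24}$ and $S(p) = R(x^4)$, while intersecting it with $p \equiv 1 \pmod{8}$ gives $p \equiv 9, 17 \pmod{24}$ and $S(p) = (R(x^2) \backslash R(x^4)) \cup \{0\}$. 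One should observe that the classes $9$ and $21$ modulo $24$ share the factor $3$ with the modulus, so they contain no prime $p \ge 7$ at all; they appear only so that each branch is described by a clean set of residues, and this is also why the last three cases use modulus $24$ rather than $12$ --- the value of $p$ modulo $8$ is what separates $R(x^4)$ from $(R(x^2) \backslash R(x^4)) \cup \{0\}$. Collecting the five residue lists, with $p \equiv 3 \pmod{4}$ in the first two and $p \equiv 1 \pmod{4}$ in the last three, yields the claimed classification.
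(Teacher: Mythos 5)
Your proposal is correct and follows essentially the same route as the paper: both extract from the proofs of Theorems \ref{thm:3-mod-4} and \ref{thm:1-mod-4} that $-1\in S(p)$ is equivalent to $\leg{7}{p}=1$ (for $p\equiv 3\pmod 4$) or $\leg{3}{p}=1$ (for $p\equiv 1\pmod 4$), then translate these into residue classes via quadratic reciprocity and combine with $p$ modulo $4$ or $8$ by CRT. Your only deviations are cosmetic: you derive the reciprocity class lists instead of quoting them, and you explicitly flag that the classes $9,21\pmod{24}$ contain no primes (strictly, they are not in the stated intersections, so listing them is only justified by this vacuity), a point the paper's bookkeeping leaves implicit.
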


\begin{proof}
Using quadratic reciprocity, it is known that $7\in A_{0}$ if and only if $p\equiv \pm 1,\pm 3\pm 9\pmod{28}$, and that $3\in A_{0}$ if and only if $p\equiv \pm 1\pmod{12}$.
\begin{enumerate}
    \item Suppose $p\equiv 3\pmod{4}$. We know from Theorem \ref{thm:3-mod-4} and its proof that
    \begin{align*}
        S(p) &=
        \begin{cases}
            \hfill \mathbb{Z}_p &\text{ if } -1\in S(p)\\
            \hfill R (x^2) &\text{ if } -1 \not\in S(p)
        \end{cases}\\
        &= \begin{cases}
            \hfill \mathbb{Z}_p &\text{ if } 7\in A_{0} \\
            \hfill R (x^2) &\text{ if } 7 \in A_{1}\cup\{0\}
        \end{cases}.
    \end{align*}
    The residue classes of $p$ with $7\in A_{0}$ are $p\equiv \pm 1,\pm 3\pm 9\pmod{28}$, and the residue classes of $p$ with $7\in A_{1}$ are $p\equiv \pm 5,\pm 11\pm 13\pmod{28}$, and $p=7$ has $\leg{7}{p} = 0$ ($p=7$ is the only prime in the residues class of $7$ modulo $28$). Eliminating the classes that do not satisfy $p\equiv 3\pmod{4}$, we get
    $$S(p) =
    \begin{cases}
        \hfill \mathbb{Z}_p &\text{ if } p\equiv 3,19,27\pmod{28}\\
        \hfill R (x^2) &\text{ if } p\equiv 7,11,15,23\pmod{28}
    \end{cases},$$
    which indeed covers all cases modulo $28$ of $p\equiv 3\pmod{4}$.
    \item Suppose $p\equiv 1\pmod{4}$. We know from Theorem \ref{thm:1-mod-4} and its proof that
    \begin{align*}
        S(p) &=
        \begin{cases}
            \hfill R (x^2) &\text{ if } -1\in S(p)\\
            \hfill R (x^4) &\text{ if } -1\not\in S(p),\; p\equiv 5\pmod{8}\\
            \hfill \left(R (x^2)\backslash R (x^4)\right)\cup\{0\} &\text{ if } -1 \not\in S(p),\; p\equiv 1\pmod{8}
        \end{cases}\\
        &= \begin{cases}
            \hfill R (x^2) &\text{ if } 3\in A_{0}\\
            \hfill R (x^4) &\text{ if } 3\in A_{1},\; p\equiv 5\pmod{8}\\
            \hfill \left(R (x^2)\backslash R (x^4)\right)\cup\{0\} &\text{ if } 3 \in A_{1},\; p\equiv 1\pmod{8}
        \end{cases}\\
    \end{align*}
    The residue classes of $p$ with $3\in A_{0}$ are $p\equiv \pm 1\pmod{12}$, and the residue classes of $p$ with $3\in A_{1}$ are $p\equiv \pm 3, \pm 5\pmod{12}$. Eliminating the classes that do not satisfy $p\equiv 1\pmod{4}$, we get $3\in A_{0}$ if and only if $p\equiv 1\pmod{12}$, and $3\in A_{1}$ if and only if $p\equiv 5,9\pmod{12}$. But we need to do casework on primes congruent to $5$ or $1$ modulo $8$, so we scale the modulus of the primes up to $24$ to get $3\in A_{0}$ if and only if $p\equiv 1,13\pmod{24}$, and $3\in A_{1}$ if and only if $p\equiv 5,9,17,21\pmod{24}$. Here, $5$ and $21$ reduce to $5$ modulo $8$, and $9$ and $17$ reduce to $1$ modulo $8$. Therefore,
    $$S(p) =
    \begin{cases}
        \hfill R (x^2) &\text{ if } p\equiv 1,13\pmod{24}\\
        \hfill R (x^4) &\text{ if } p\equiv 5,21\pmod{24}\\
        \hfill \left(R (x^2)\backslash R (x^4)\right)\cup\{0\} &\text{ if } p\equiv 9,17\pmod{24}
    \end{cases},
    $$
    which indeed covers all cases modulo $24$ of $p\equiv 1\pmod{4}$.
\end{enumerate}
\end{proof}

\section{Excluded Primes}

For completeness, we obtain the sum of residues and the set of these sums for $p=3$ and $p=5$.

First, we compute $\sum{R_3(x^4 + cx^2 + e)}$ and $\sum{R_5(x^4 + cx^2 + e)}$.

\begin{enumerate}
    \item Modulo $3$, $A_{0}'=\{0,1\}$ and $A_{1}'=\{0,2\}$. Then
    \begin{align*}
        R_{a\in A_{0}'}(a^2 +a)&= \{0^2 + 0,1^2 +1\}=\{0,2\},\\
        R_{a\in A_{1}'}(a^2 +a)&= \{0^2 + 0,2^2 +2\}=\{0\},\\
        \sum{R_3(x^4 + cx^2 + e)} &\equiv
        \begin{cases}
            \hfill (0+2)c^2 + 2e \equiv 2c^2 + 2e &\text{ if } c\in A_{0}\\
            \hfill 0c^2 + 1e \equiv e &\text{ if } c\in A_{1}
        \end{cases}.
    \end{align*}
    \item Modulo $5$, $A_{0}' = \{0,1,4\}$ and $A_{1}' = \{0,2,3\}$. Then
    \begin{align*}
        R_{a\in A_{0}'}(a^2 +a)&= \{0^2 + 0,1^2 +1,4^2+4\}=\{0,2\},\\
        R_{a\in A_{1}'}(a^2 +a)&= \{0^2 + 0,2^2 +2, 3^2 + 3\}=\{0,1,2\},\\
        \sum{R_5(x^4 + cx^2 + e)} &\equiv
        \begin{cases}
            \hfill (0+2)c^2 + 2e \equiv 2c^2 + 2e &\text{ if } c\in A_{0}\\
            \hfill (0+1+2)c^2 + 3e \equiv 3c^2 +3e &\text{ if } c\in A_{1}
        \end{cases}.
    \end{align*}
\end{enumerate}

Secondly, we compute $S(3)$ and $S(5)$.
\begin{enumerate}
    \item By above, modulo $3$,
    \begin{align*}
        A_{0}' &=\{0,1\},\\
        A_{1} &=\{2\},\\
        \sum{R_3 (x^4 + cx^2)} &\equiv
        \begin{cases}
            \hfill 2c^2 \pmod{3} &\text{ if } c\in A_0\\
            \hfill 0  \pmod{3} &\text{ if } c\in A_1
        \end{cases}.
    \end{align*}
    As a result, $$S(3)=\{0, 2\cdot 1^2,0\}=\{0,2\}$$
    \item Again, by above, modulo $5$,
    \begin{align*}
        A_{0}' &=\{0,1,4\},\\
        A_{1} &=\{2,3\},\\
        \sum{R_5 (x^4 + cx^2)} &\equiv
        \begin{cases}
            \hfill 2c^2 \pmod{5} &\text{ if } c\in A_0\\
            \hfill 3c^2 \pmod{5} &\text{ if } c\in A_1
        \end{cases}.
    \end{align*}
    As a result, $$S(5)=\{0, 2\cdot 1^2,3\cdot 2^2,3\cdot 3^2,2\cdot 4^2\}=\{0,2\}.$$
\end{enumerate}

It does not seem to be possible to place these computations for $p=3,5$ under the same arguments or formulations as for primes $p\ge 7$.

\end{document}